\theoremstyle{plain}
\newtheorem{theorem}{Theorem}[section]
\newtheorem{lemma}[theorem]{Lemma}
\newtheorem{proposition}[theorem]{Proposition}
\newtheorem*{theorem*}{Theorem}
\theoremstyle{definition}
\newtheorem{definition}[theorem]{Definition}
\newtheorem*{example*}{Example}
\newtheorem*{definition*}{Definition}
\theoremstyle{remark}
\newtheorem{remark}[theorem]{Remark}
\newcommand{\Q}{\mathbb{Q}} % \Q   = numeri razionali
\newcommand{\R}{\mathbb{R}} % \R   = numeri reali
\newcommand{\C}{\mathbb{C}} % \C   = numeri complessi
\title{p-Symplectic and p-pluriclosed structures on solvmanifolds}
\author{Ettore Lo Giudice and Adriano Tomassini}
\address[Ettore Lo Giudice, Adriano Tomassini]{
Dipartimento di Scienze Matematiche, Fisiche e Informatiche
Unit\`a di Matematica e Informatica\\
Universit\`a degli Studi di Parma\\
Parco Area delle Scienze 53/A, 43124\\
Parma, Italy}
\email{ettore.logiudice@unipr.it}
\email{adriano.tomassini@unipr.it}
\keywords{$p$-K\"ahler structure; $p$-symplectic structure; $p$-pluriclosed structure; nilmanifold; holomorphically parallelizable; SKT metric; Astheno-K\"ahler metric}
\thanks{The first author is partially supported by GNSAGA of INdAM. The second author is partially supported by the project PRIN2022 “Real and Complex Manifolds: Geometry and Holomorphic Dynamics” (Project code: 2022AP8HZ9) and by GNSAGA of INdAM}
\subjclass[2020]{32J27; 53C15}
\begin{document}

\maketitle
\tableofcontents
\begin{abstract}
    Let $(M,J)$ be a $n$-dimensional complex manifold: a {\em $p$-K\"ahler structure} (resp. {\em $p$-symplectic structure}) on $M$ is a real, closed $(p,p)$-transverse form $\Omega$ (resp. real, closed $2p$-form whose $(p,p)$-component is transverse). We give obstructions to the existence of such structures on compact complex manifolds. We provide several families of compact complex manifolds which admit both $(n-1)$-symplectic structures and special Hermitian metrics. 
\end{abstract} 

\section{Introduction}
A {\em K\"ahler structure} on a $2n$-dimensional smooth manifold $M$ is given by a triple $(J,g,\omega)$, where $J$ is a complex structure on $M$, that is a $(1,1)$-tensor field satisfying $J^2=-\hbox{\rm Id}_{TM}$ whose torsion tensor vanishes, $g$ is a $J$-Hermitian metric, namely a Riemannian metric on $M$ such that $J$ acts on $M$ as an isometry and $\omega(\cdot,\cdot)=g(\cdot, J\cdot)$ is the fundamental form of $g$ which satisfies $d\omega=0$.\newline 
The existence of a K\"ahler structure on a compact manifold imposes strong constraints from the topological point of view, e.g., the odd index Betti numbers are even, the even index Betti numbers are positive, the de Rham complex of $M$ is a formal Differential Graded Algebra, that is $M$ is formal in the sense of Sullivan \cite{DGMS1975}. Furthermore, the complex manifold $(M,J)$ underlying to the K\"ahler manifold $(M,J,g,\omega)$ satisfies the $\partial\overline{\partial}$-Lemma and the symplectic manifold $(M,\omega)$ satisfies the Hard Lefschetz Condition. \newline
Nevertheless, many examples of compact complex manifolds with no K\"ahler structure can be naturally constructed as compact quotients of connected and simply connected solvable Lie groups by lattices. Such manifolds can be equipped with special Hermitian metrics, whose fundamental form satisfies mild differential conditions than the K\"ahler one.\newline
We briefly recall that if $(M,J)$ is a complex manifold of complex dimension $n$, then a metric $g$, with fundamental form $\omega$, is called 
\begin{itemize}
    \item {\em strong-K\"ahler with torsion} (shortly, {\em SKT}) if $\partial \overline{\partial} \omega = 0$ (see \cite{B1989});
    \item {\em Astheno-K\"ahler} if $\partial \overline{\partial} \omega^{n-2} = 0$ (see \cite{JY1993});
    \item {\em balanced} if $d \omega^{n-1} = 0$ (see \cite{M1982}).
\end{itemize}
Such Hermitian metrics have been extensively studied (see \cite{AG1986}, \cite{AI2001}, \cite{B1989}, \cite{FPS2004}, \cite{FT2011}, \cite{JY1993}, \cite{MT2001}, \cite{M1982} and the references therein). As already reminded, a large class of examples of compact complex manifolds that admit such metrics but no K\"ahler metrics is provided by {\em solvmanifolds} (resp. {\em nilmanifolds}): by a solvmanifold (resp. nilmanifold) we mean a compact quotient of a simply connected, connected solvable (nilpotent) Lie group by a lattice. 

In \cite{S1976}, D. Sullivan introduced the notion of {\em transversality} in the context
of cone structures, namely a continuous field of 
cones of $p$-vectors on a manifold. Later, L. Alessandrini and M. Andreatta (see \cite{AA1987, AA1987Erratum}) defined the {\em $p$-K\"ahler manifolds} as complex manifolds endowed with a closed $(p,p)$-transverse form. Such forms on a complex manifold are called {\em $p$-K\"ahler structures}. Note that closed $(1,1)$-transverse forms, respectively, closed $(n-1,n-1)$-transverse forms correspond to K\"ahler metrics, respectively, to balanced metrics. For $1<p<n-1$, closed $(p,p)$-transverse forms have non metric meaning, that is if the $p$-power of the fundamental form $\omega$ of a Hermitian metric $g$ is closed, then $d\omega=0$, namely $g$ is a K\"ahler metric. The notions of $p$-{\em pluriclosed manifolds} (resp. structures), respectively, $p$-{\em symplectic manifolds} (resp. structures) can be introduced in a similar way (see \cite{A2017} and Definitions \ref{Definitions of p-structures}, \ref{Definitions of p-manifolds}).

In the context of non-K\"ahler geometry, many authors have investigated the possible coexistence of different metric structures on the same complex manifold. Some of them focused on the possible coexistence of SKT or Astheno-K\"ahler metrics and balanced metrics. 
 
In this article, we focus on  {\em $p$-symplectic} (resp. {\em $p$-K\"ahler}, {\em $p$-pluriclosed}) structures. For $p = 1,n-1$, $p$-pluriclosed structures coincide with SKT and {\em Gauduchon} metrics, respectively. Furthermore, $1$-symplectic structures are referred to as {\em Hermitian-symplectic structures} in \cite{EFV2012, FV2019}, \cite{ST2010}, while $(n-1)$-symplectic structures are called {\em strongly Gauduchon metrics} in \cite{COUV2016}, \cite{LU2017}, \cite{P2013} and \cite{X2015}. 

In \cite{AI2001} and \cite{MT2001} the authors proved that the same metric cannot be both SKT and balanced or Astheno-K\"ahler and balanced unless it is a K\"ahler metric. Thus, some authors have studied whether a compact complex manifold could admit two different metrics, $g$ and $g'$, such that $g$ is SKT or Astheno-K\"ahler and $g'$ is balanced. In \cite{FV2016}, it was proven that on a nilmanifold equipped with an invariant complex structure the existence of a SKT metric $g$ and a balanced metric $g'$ forces the nilmanifold to be a torus. In particular, the authors conjectured that a similar behaviour holds for every compact complex manifold. Meanwhile, in \cite{FGV2019}, \cite{LU2017} and \cite{ST2023} are constructed examples of compact complex manifolds that admit two different metrics such that one of them is balanced and the other one is Astheno-K\"ahler. Moreover, we mention that, in \cite{EFV2012, FV2019}, it is showed that $1$-symplectic structures cannot exist on nilmanifolds equipped with an invariant complex structure unless they are tori.

The aim of this paper is to investigate the existence of $(n-1)$-symplectic structures on nilmanifolds equipped with invariant complex structure. Specifically, we show that there are families of nilmanifolds that admit both $(n-1)$-symplectic structures together with special Hermitian metric structures (see Theorem \ref{3-symplectic-astheno-skt} and Theorem \ref{4-symplectic and Astheno and 2-pluriclosed}). We mention that in \cite{COUV2016}, the authors classified all possible $6$-dimensional nilmanifolds equipped with an invariant complex structure that can admit $2$-symplectic structures. In particular, they describe which $6$-dimensional nilmanifolds have $2$-symplectic structures but no balanced metrics. Moreover, in \cite[Proposition 2.1]{OUV2017}, the authors studied all possible $6$-dimensional non-K\"ahler nilmanifolds that can admit SKT metrics and $2$-symplectic structures. They show that, in these cases, all invariant Hermitian metrics are SKT and their second exterior power is the $(2,2)$-component of a $2$-symplectic structure.

Furthermore, we investigate the existence of $p$-symplectic (resp. $p$-K\"ahler, $p$-pluriclosed) structures on {\em compact holomorphically parallelizable manifolds} of solvable type.

The paper is organized as follows: in Section \ref{Section 2} and \ref{Section 3}, we establish the notation used throughout the article and provide basic definitions concerning the positivity of $(p,p)$-forms and the concept of $p$-symplectic (resp. $p$-K\"ahler, $p$-pluriclosed) manifolds.

In Section \ref{Section 4}, we extend the obstructions for the existence of $p$-K\"ahler and $p$-pluriclosed structures provided in \cite{HMT2023} and \cite{ST2023} to the non-invariant case. Furthermore, we provide an obstruction for the existence of $p$-symplectic structures on compact complex manifolds (Lemma \ref{No-existence of p-symplectic}). 

Section \ref{Section 5} is devoted to the existence of $p$-symplectic (resp. $p$-K\"ahler, $p$-pluriclosed) structures on compact holomorphically parallelizable manifolds. In \cite{AB1991}, the authors showed that, on such manifolds, the existence of a $p$-symplectic structure is equivalent to the existence of a $p$-K\"ahler one. We first show that on compact holomorphically parallelizable manifolds the existence of such structures is equivalent to the existence of a $p$-pluriclosed one (Theorem \ref{Equivalence between p-forms on holomorphically parallelizable manifolds}). Subsequently, we focus our attention on compact holomorphically parallelizable manifolds of type $IV$ and $V$, according to the classification by I. Nakamura (see \cite{N1975}). We show that, a compact holomorphically parallelizable manifold of type $IV$ or $V$, which is not a torus or of type $V \, 3)$, cannot admit a $p$-symplectic (resp. $p$-K\"ahler, $p$-pluriclosed) structure (Propositions \ref{No p-structures on type IV}, \ref{No p-structures on Type V}). 

In Section \ref{Section 6} we study three families of nilmanifolds. Firstly, we consider the {\em Fino-Parton-Salamon nilmanifolds}, i.e., the $6$-dimensional nilmanifolds introduced in \cite{FPS2004} and we directly prove that they admit both $2$-symplectic structures and SKT metrics (Theorem \ref{2-symplectic-skt}). As already reminded, in \cite[Proposition 2.1] {OUV2017} the authors gave a complete classification of all 6-nilmanifolds that admit both an SKT metric and a $2$-symplectic structure.  
\newline
The second one highlights the coexistence of $3$-symplectic structures together with SKT and Astheno-K\"ahler metrics on the family of $8$-dimensional nilmanifolds introduced in \cite{FT2011} (Corollary \ref{3-symplectic-astheno-skt}). In particular, according to N. Enrietti, A. Fino and L. Vezzoni (see \cite{EFV2012, FV2019}), if a compact complex nilmanifold is $1$-symplectic, then it is K\"ahler; the same conclusion does not hold for $p=n-1$, indeed any complex nilmanifold belonging to one of the above two families has a $(n-1)$-symplectic structure with no balanced metrics, for $n=3$ and $n=4$, respectively.\newline
The third one provides $10$-dimensional nilmanifolds \cite{ST2023} carrying $4$-symplectic structures, Astheno-K\"ahler metrics and $2$-pluriclosed structures (Corollary \ref{4-symplectic and Astheno and 2-pluriclosed}).

In \cite{T2015}, V. Tosatti introduced the notion of {\em non K\"ahler Calabi-Yau manifolds}, as compact complex manifolds with vanishing first Bott-Chern class $c_{1}^{BC}(M)$ in $H^{1,1}_{BC}(M;\R)$ (in \cite{FJS2023}, the authors considered compact complex manifolds with first Chern class vanishing in $H^{2}_{dR}(M;\R)$). In particular, for such manifolds $c_1(M)=0$ in $H^2_{dR}(M;\R)$. He gave some explicit examples and constructions of compact complex manifolds, some with $c_{1}^{BC}(M)=0$ and others with $c_{1}^{BC}(M)\neq 0$, proving that the class of non K\"ahler Calabi-Yau manifolds is strictly contained in the class of compact complex manifolds with trivial first Chern class. 
\newline 
We show that the non-K\"ahler, generalized K\"ahler,  solvmanifold constructed in \cite{FT2009} has vanishing first Chern class but non vanishing first Bott-Chern class. This provides another example beside to those ones obtained in \cite{T2015} of compact complex manifolds with $c_{1}^{BC}(M)\neq 0$ in $H^{1,1}_{BC}(M;\R)$ and  $c_1(M)=0$ in $H^2_{dR}(M;\R)$.
\vskip.3truecm
{\em \underline{Acknowledgments:}} The authors would like to thank Valentino Tosatti, Gueo Grantcharov and Asia Mainenti for useful comments and remarks. Many thanks are also due to Luis Ugarte for bringing to their attention the references \cite{COUV2016} and \cite{OUV2017}.  

\section{Preliminaries}\label{Section 2}
Let $(M,J)$ be a complex manifold of complex dimension $n$, i.e., a real $2n$-dimensional manifold equipped with $J \in \text{End}(TM)$ such that $J^2=-\hbox{\rm Id}_{TM}$ and the Nijenhuis tensor associated to $J$ vanishes. We call $J$ the complex structure of $M$.

The endomorphism $J$ can be extended by $\C$-linearity to the complexified tangent bundle $T_{\C}M \doteq TM \otimes \C$. Thus, $T_{\C}M$ decomposes as a direct sum of the eigenbundles associated to $\pm i$, namely, 
\begin{equation*}
    T_{\C}M = T^{1,0}M \oplus T^{0,1}M, 
\end{equation*}
where $T^{1,0}M \doteq \{X \in T_{\C}M \, | \, JX =i X\}$ and $T^{0,1}M \doteq \{X \in T_{\C}M \, | \, JX =-i X\}$. The exterior bundle of complex $r$-forms, $\Lambda^{r}_{\C}M \doteq \Lambda^{r} T_{\C}^{\ast}M$, decomposes as
\begin{equation*}
    \Lambda^{r}_{\C}M = \oplus_{p+q=r} \Lambda^{p,q}M,
\end{equation*} 
where $\Lambda^{p,q}M \doteq \Lambda^{p}(T^{1,0}M)^{\ast} \otimes \Lambda^{q}(T^{0,1}M)^{\ast}$. Furthermore, we adopt the following notation 
\begin{equation*}
    \Lambda^{r}_{\R} M \doteq \{ \eta \in \Lambda^{r}_{\C}M \, | \, \overline{\eta} = \eta\}, \quad \Lambda^{p,p}_{\R} M \doteq \{\eta \in \Lambda^{p,p}_{\C}M \, | \, \overline{\eta} = \eta \}.
\end{equation*}

Let us denote by $T_{x}M$ and $T_{x}^{\ast}M$ the tangent space of $M$ at $x \in M$ and its dual, respectively. Let $\{ \varphi_{i} \}_{i=1}^{n}$ be a basis of $\Lambda^{1,0} (T_{x}^{\ast}M \otimes \C)$, and let us denote $\varphi^{I_p} \doteq \varphi^{i_1} \wedge \dots \wedge \varphi^{i_p}$, where $I_{p} = (i_{1},\dots, i_{p})$. Thus, 
\begin{equation*}
    \{\varphi^{I_{p}} \wedge \overline{\varphi}^{J_{q}} \, | \, i_{1}<\dots < i_{p}, j_{1}< \dots < j_{q}  \}
\end{equation*}
is a basis of $\Lambda^{p,q} (T_{x}^{\ast}M \otimes \C)$. 

A basis for $\Lambda^{p,p}_{\R}(T^{\ast}_{x}M) \doteq \{ \eta \in \Lambda^{p,p}(T^{\ast}_{x}M \otimes \C) \, | \, \overline{\eta} = \eta\}$ is given by $\{ \sigma_{p} \varphi^{I_{p}} \wedge \overline{\varphi}^{I_{p}} \, | \, i_{1}<\dots < i_{p}\}$, where $\sigma_{p} \doteq i^{p^{2}}2^{-p}$, moreover 
\begin{equation*}
    \text{Vol} \doteq (\frac{i}{2} \varphi^{1} \wedge \overline{\varphi}^{1}) \wedge \dots \wedge (\frac{i}{2} \varphi^{n} \wedge \overline{\varphi}^{n}) = \sigma_{n} \varphi^{1}\wedge \dots \wedge \varphi^{n} \wedge \overline{\varphi}^{1} \wedge \dots \wedge \overline{\varphi}^{n},
\end{equation*}
is a volume form.

\begin{definition}
     $\psi \in \Lambda^{p,0}(T^{\ast}_{x}M \otimes \C)$ is said to be {\em simple} if $ \psi = \psi^{1} \wedge \dots \wedge \psi^{p}$, where $\psi^{i} \in \Lambda^{1,0}(T^{\ast}_{x}M \otimes \C)$, for $i=1, \dots, p$.
\end{definition}

Let $\psi \in \Lambda^{n,n}_{\R}(T^{\ast}_{x}M)$, then it is called positive (resp. strictly positive) if $\psi = a \text{Vol}$, where $a \geq 0$ ($a > 0$).
We adopt the following definitions of positivity for a $(p,p)$-form:
\begin{definition}\label{Definitions of positivity of forms}
    Let $\psi \in  \Lambda^{p,p}_{\R}(T^{\ast}_{x}M)$, then 
        \begin{enumerate}
            \item $\psi$ is called {\em transverse}, if 
                \begin{equation*}
                    \sigma_{n-p} \psi \wedge \beta \wedge \overline{\beta} 
                \end{equation*}
                is strictly positive, $\forall \beta \in \Lambda^{n-p,0}(T^{\ast}_{x}M \otimes \C)$ simple, $\beta \neq 0$; 
            \item $\psi$ is called {\em positive definite}, if 
                \begin{equation*}
                    \sigma_{n-p} \psi \wedge \beta \wedge \overline{\beta}
                \end{equation*}
                is strictly positive, $\forall\beta \in \Lambda^{n-p,0}(T^{\ast}_{x}M \otimes \C)$, $\beta \neq 0$;
            \item $\psi$ is called {\em strictly strongly positive}, if there exists $\{\psi^{1},\dots, \psi^{r}\}$, $\psi^{i} \in \Lambda^{p,0}(T^{\ast}_{x}M \otimes \C)$ simple, such that 
                \begin{equation*}
                    \psi = \sigma_{p} \sum_{i=1}^{r} \psi^{i} \wedge \overline{\psi}^{i},
                \end{equation*}
            and 
            \begin{equation*}
                    \sigma_{n-p} \psi \wedge \beta \wedge \overline{\beta}
            \end{equation*}
            is strictly positive, $\forall\beta \in \Lambda^{n-p,0}(T^{\ast}_{x}M \otimes \C)$, $\beta \neq 0$.
        \end{enumerate}
\end{definition}

\begin{remark}
    It can be easily checked that the three definitions of positivity coincide if $p=1,n-1$ (see \cite{HK1974}). 
\end{remark}
In this paper by a {\em solvmanifold} we mean a compact quotient $M \doteq \Gamma\backslash G$ of a connected, simply conntected, solvable Lie group $G$ by a lattice, i.e., a uniform discrete subgroup $\Gamma \subseteq G$. In particular, when $G$ is nilpotent we call $M$ a {\em nilmanifold}. We recall that a $n$-dimensional complex manifold $M$ is said to be {\em holomorphically parallelizable} if $T^{1,0}M$ is trivial as a holomorphic bundle, that is there exist $n$ holomorphic $1$-forms on $M$ which are linearly independent at every point of $M$. A complex structure $J$ on a solvmanifold $M=\Gamma\backslash G$ is said to be {\em invariant} if it stems from a left-invariant complex structure on the Lie group $G$. We recall that a solvmanifold with invariant complex structure is called {\em almost Abelian} if its Lie algebra admits an abelian ideal of codimension $1$. 

Furthermore, a nilmanifold equipped with an invariant complex structure $J$ is called $J$-{\em nilpotent} if the ascending series $\{\mathfrak{g}_{i}^{J}\}$ defined by
\begin{equation*}
    \mathfrak{g}^{J}_{0} \doteq 0, \quad \mathfrak{g}^{J}_{i} \doteq \{X \in \mathfrak{g} \enskip | \enskip [X, \mathfrak{g}]\subseteq \mathfrak{g}^{J}_{i-1}, \quad [J X, \mathfrak{g}] \subseteq \mathfrak{g}^{J}_{i-1} \}
\end{equation*}
satisfies $\mathfrak{g}^{J}_{k}= \mathfrak{g}$ for some $k > 0$, or equivalently (see \cite{CFGU1997}, \cite{CFGU2000}), if there exists $\{\varphi^{i}\}_{i=1, \dots n}$ coframe of invariant $(1,0)$-forms such that for $i=1,\dots n$,
\begin{equation*}
    d \varphi^{i} = \Lambda^{2} \langle \varphi^{1}, \dots, \varphi^{i-1}, \varphi^{\overline{1}}, \dots, \varphi^{\overline{i-1}} \rangle.
\end{equation*}

\section{$p$-symplectic, $p$-k\"ahler and $p$-pluriclosed manifolds}\label{Section 3}
We are ready to state the following definitions of $p$-{\em K\"ahler}, $p$-{\em pluriclosed}, $p$-{\em symplectic} structures on a complex manifold. The requirement is that the transverse condition must hold pointwise. Naturally, these definitions can be generalized for almost complex manifolds (see \cite{HMT2023}, \cite{ST2023}).

\begin{definition}\label{Definitions of p-structures}
    Let $(M,J)$ be a complex manifold of complex dimension $n$, and let $1 \leq p \leq n$. Let $\psi$ be a real $(p,p)$-form, then 
    \begin{enumerate}
        \item $\psi$ is called a $p$-{\em K\"ahler} structure if it is $d$-closed and $\psi_{x} \in \Lambda^{p,p}_{\R}(T_{x}^{\ast}M)$ is transverse $\forall x \in M$;
        \item $\psi$ is called a $p$-{\em pluriclosed} structure if it is $\partial \overline{\partial}$-closed and $\psi_{x} \in \Lambda^{p,p}_{\R}(T_{x}^{\ast}M)$ is transverse $\forall x \in M$;
        \item $\Psi \in \Lambda^{2p}_{\R}M$ is called a $p$-{\em symplectic} structure if $\Psi$ is $d$-closed and its $(p,p)$-component, $\Psi^{p,p}_{x} \in \Lambda^{p,p}_{\R}(T_{x}^{\ast}M)$, is transverse $\forall x \in M$.
    \end{enumerate}
\end{definition}

\begin{definition}\label{Definitions of p-manifolds}
    The triple $(M,J,\psi)$, where $M$ is a complex manifold, $J$ is a complex structure and $\psi \in \Lambda^{2p}_{\R}M$ is a $p$-symplectic (resp. $p$-K\"ahler, $p$-pluriclosed) structure is called a $p$-{\em symplectic} (resp. {\em $p$-K\"ahler}, {\em $p$-pluriclosed}) {\em manifold}.  
\end{definition}

\begin{remark}
    From the definitions above, we can easily see that $p$-K\"ahler structures are contained in $p$-symplectic ones.
\end{remark}

\begin{remark}\label{p-structures for p = 1,n-1}
    Let $p=1$, then $1$-K\"ahler structures are equivalent to K\"ahler metrics, while $1$-pluriclosed structures are equivalent to  Strong-K\"ahler with torsion metrics. Moreover, $1$-symplectic structures have been called Hermitian-symplectic structures in \cite{EFV2012, FV2019}, \cite{ST2010}.

    Let $p=n-1$, from \cite{M1982}, we know that $(n-1,n-1)$-forms which are transverse are the $(n-1)$-power of Hermitian metrics. Hence, $(n-1)$-K\"ahler structures are equivalent to balanced metrics. Furthermore, P. Gauduchon in \cite{G1977} proved that if $(M,J)$ is a compact complex manifold of real dimension $2n > 2$, then given an Hermitian metric $g$ there exists a unique conformally equivalent metric $\widetilde{g}$ whose fundamental form $\widetilde{\omega}$ is such that $\widetilde{\omega}^{n-1}$ is a $(n-1)$-pluriclosed structure. A Hermitian metric on $M$ such that the $(n-1)$-power of its fundamental form is a $(n-1)$-pluriclosed structure is called a Gauduchon metric. Hence, from \cite{M1982}, $(n-1)$-pluriclosed structures are equivalent to Gauduchon metrics. 

    In \cite{P2013} and \cite{X2015}, $(n-1)$-symplectic manifolds are referred to as strongly Gauduchon manifolds. Indeed, the existence of a $(n-1)$-symplectic form is equivalent to the existence of a Hermitian metric $\omega$ such that $\partial \omega^{n-1}$ is $\overline{\partial}$-exact. Metrics satisfying the latter condition are called strongly Gauduchon metrics.
\end{remark}

In \cite[Proposition 4.1]{X2015}, the author proved that strongly Gauduchon manifolds are stable under small holomorphic deformations. Similarly, we can easily observe that $p$-symplectic manifolds are also stable under small holomorphic deformations. Indeed, let $\pi: \chi \to M$ be a holomorphic deformation of compact complex manifolds with base manifold $M$, then we can adapt the proof of \cite{X2015} to prove that $p$-symplectic manifolds are stable under small deformation. 

\section{Obstructions}\label{Section 4}
Let $(M,J)$ be a compact complex manifold of complex dimension $n$. In \cite{HMT2023} and \cite{ST2023}, the authors provided obstructions to the existence of $p$-K\"ahler and $p$-pluriclosed structures. Here, we extend these obstructions to the non-invariant case. Subsequently, we state a lemma that provides an obstruction to the existence of $p$-symplectic structures. From now on, a form that satisfies the first definition of Definition \ref{Definitions of p-structures} will be referred to as either a $p$-K\"ahler structure or a $p$-K\"ahler form. The same terminology will be used for $p$-pluriclosed structures and $p$-symplectic structures.

First of all, we notice that the obstruction to the existence of $p$-K\"ahler forms, provided in \cite[Proposition 3.4]{HMT2023}, holds true also if 
\begin{equation*}
    (d \beta)^{n-p,n-p} = \sum_{i} f_{i} \psi^{i} \wedge \overline{\psi}^{i},
\end{equation*}
where $\beta$ is a $(2n-2p-1)$-form, $\psi^{i}$ are simple $(n-p,0)$-forms and $f_{i}$ are functions on $M$ which have the same sign.

Furthermore, we can generalize the obstruction to the existence of $p$-pluriclosed forms, provided in \cite[Lemma 3.5]{ST2023}, in a similar manner. For the sake of completeness we provide a different proof for complex manifold in the following lemma.

\begin{lemma}\label{No-existence of p-pluriclosed}
    Let $(M,J)$ be a compact complex manifold of complex dimension $n$. Suppose that $\beta$ is a $(2n-2p-2)$-form such that
    \begin{equation*}
        (\partial \overline{\partial} \beta)^{n-p,n-p} = \sum_{i} f_{i} \psi^{i} \wedge \overline{\psi}^{i},
    \end{equation*}
     where $\psi^{i}$ are simple $(n-p,0)$-forms, and $f_{i}$ are functions on $M$ which have the same sign. Then, $(M,J)$ does not admit a $p$-pluriclosed form.
\end{lemma}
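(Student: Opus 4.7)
My plan is to argue by contradiction: suppose $\Omega \in \Lambda^{p,p}_{\R} M$ is a $p$-pluriclosed form, so $\partial\overline{\partial}\Omega = 0$ and $\Omega$ is pointwise transverse. I will evaluate the integral $I \doteq \int_M \Omega \wedge \partial\overline{\partial}\beta$ in two incompatible ways.

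For the first evaluation, I use bidegree: wedging with $\Omega \in \Lambda^{p,p}$ annihilates every bidegree component of $\partial\overline{\partial}\beta$ except the $(n-p,n-p)$-one, so by the hypothesis
\[
I \;=\; \int_M \Omega \wedge (\partial\overline{\partial}\beta)^{n-p,n-p} \;=\; \sum_i \int_M f_i\,\Omega \wedge \psi^i \wedge \overline{\psi}^i.
\]
Since each $\psi^i$ is simple and $\Omega$ is transverse, the pointwise form $\sigma_{n-p}\,\Omega \wedge \psi^i \wedge \overline{\psi}^i$ is a nonnegative multiple of the volume form, strictly positive where $\psi^i \neq 0$. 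As the $f_i$ share a common sign, $\sigma_{n-p} I$ has a definite sign and is nonzero whenever $\sum_i f_i\,\psi^i \wedge \overline{\psi}^i$ is not identically zero, which is the nontrivial case of interest.

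For the second evaluation, I apply Stokes' theorem twice to push $\partial\overline{\partial}$ onto $\Omega$. Since only the $(n-p-1,n-p-1)$-component of $\beta$ contributes to $I$, I may replace $\beta$ by $\beta^{n-p-1,n-p-1}$. From $d(\Omega \wedge \overline{\partial}\beta) = d\Omega \wedge \overline{\partial}\beta + \Omega \wedge \partial\overline{\partial}\beta$ and Stokes on the closed manifold $M$, I obtain
\[
I \;=\; -\int_M d\Omega \wedge \overline{\partial}\beta \;=\; -\int_M \partial\Omega \wedge \overline{\partial}\beta,
\]
the second equality because $\overline{\partial}\Omega \wedge \overline{\partial}\beta$ has bidegree $(n-1,n+1)$ and hence vanishes for degree reasons. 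A second application, $d(\partial\Omega \wedge \beta) = \overline{\partial}\partial\Omega \wedge \beta - \partial\Omega \wedge d\beta$, combined with $\overline{\partial}\partial\Omega = -\partial\overline{\partial}\Omega = 0$ and the vanishing of $\partial\Omega \wedge \partial\beta$ by bidegree $(n+1,n-1)$, yields $\int_M \partial\Omega \wedge \overline{\partial}\beta = 0$, so $I = 0$. This contradicts the first step.

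The main subtlety is the bidegree bookkeeping at each Stokes step; the initial reduction to $\beta^{n-p-1,n-p-1}$ ensures that in every Stokes application only a single bidegree piece survives, eliminating spurious mixed-type terms automatically and making the use of $\partial\overline{\partial}\Omega = 0$ immediate.
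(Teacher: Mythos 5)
Your proof is correct and follows essentially the same route as the paper's: reduce to the $(n-p-1,n-p-1)$-component of $\beta$, integrate by parts twice using Stokes and bidegree reasons to kill the cross terms, invoke $\partial\overline{\partial}\Omega=0$, and derive a sign contradiction from transversality. Your explicit remark that the conclusion requires $\sum_i f_i\,\psi^i\wedge\overline{\psi}^i$ not to vanish identically is a point the paper leaves implicit, and is worth keeping.
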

\begin{proof}
    Let us suppose that $\Omega$ is a $p$-pluriclosed form and that $\{f_{i}\}$ are positive. Then 
    \begin{equation}\label{First equation of Lemma No-existence of p-pluriclosed}
        0 = \int_{M} \sigma_{n-p} d\overline{\partial}(\Omega \wedge \beta) = \int_{M} \sigma_{n-p} \partial \overline{\partial}(\Omega \wedge \beta).
    \end{equation}
    Since $\Omega$ is a $(p,p)$-form, the only terms of $\beta$ that give a contribute in $\Omega \wedge \beta$ are the components with degrees $(n-p,n-p-2)$, $(n-p-1,n-p-1)$, and $(n-p-2,n-p)$.

    Let us consider the $(n-p,n-p-2)$ component of $\beta$, denoted by $\beta^{n-p,n-p-2}$. Since $\Omega \wedge \beta^{n-p,n-p-2}$ is a $(n,n-2)$-form, then $\partial \overline{\partial}(\Omega \wedge \beta^{n-p,n-p-2}) = 0$. In the same way $\partial \overline{\partial} ( \Omega \wedge \beta^{n-p-2,n-p})=0$. 

    Thus, the only term of $\beta$ which give a non-zero contribute is $\beta^{n-p-1,n-p-1}$. So, if $\widetilde{\beta} \doteq \beta^{n-p-1,n-p-1}$, then from \eqref{First equation of Lemma No-existence of p-pluriclosed} 
    \begin{equation*}
        \begin{split}
            0 & = \int_{M} \sigma_{n-p} \partial \overline{\partial}(\Omega \wedge \widetilde{\beta}) = \int_{M} \sigma_{n-p} \partial \big( \overline{\partial}\Omega \wedge \widetilde{\beta}) \big)  + \int_{M} \sigma_{n-p} \partial \big( \Omega \wedge \overline{\partial} \widetilde{\beta} \big) \\
            & =  \int_{M} \sigma_{n-p} \partial \big( \overline{\partial}\Omega \wedge \widetilde{\beta}) \big) + \int_{M} \sigma_{n-p} \partial \Omega \wedge \overline{\partial} \widetilde{\beta} + \int_{M} \sigma_{n-p} \Omega \wedge \partial \overline{\partial} \widetilde{\beta}.
        \end{split}
    \end{equation*}
    Moreover,
    \begin{equation*}
        \begin{split}
            \int_{M} \sigma_{n-p} \partial \big( \overline{\partial} \Omega \wedge \widetilde{\beta} \big) & = \int_{M} \sigma_{n-p} \partial \big( \overline{\partial} \Omega \wedge \widetilde{\beta} \big) + \int_{M} \sigma_{n-p} \overline{\partial} \big( \overline{\partial} \Omega \wedge \widetilde{\beta} \big) \\
            & = \int_{M} \sigma_{n-p} d \big( \overline{\partial} \Omega \wedge \widetilde{\beta} \big) = 0,
        \end{split}
    \end{equation*}
    where we could add $\int_{M} \sigma_{n-p} \overline{\partial} \big( \overline{\partial}\Omega \wedge \widetilde{\beta}) \big)$ because $\overline{\partial}\Omega \wedge \widetilde{\beta}$ is a $(n-1,n)$-form. \newline 
    Furthermore
    \begin{equation*}
        \begin{split}
            \int_{M} \sigma_{n-p} \partial \Omega \wedge \overline{\partial} \widetilde{\beta} & = \int_{M} \sigma_{n-p} \overline{\partial} \partial \Omega \wedge \widetilde{\beta} - \int_{M} \sigma_{n-p} \overline{\partial} \big( \partial \Omega \wedge \widetilde{\beta} \big)  \\
            & = - \int_{M} \sigma_{n-p} \overline{\partial} \big( \partial \Omega \wedge \widetilde{\beta} \big) - \int_{M} \sigma_{n-p} \partial \big( \partial \Omega \wedge \widetilde{\beta} \big) \\
            & = - \int_{M} \sigma_{n-p} d \big( \partial \Omega \wedge \widetilde{\beta} \big) = 0.
        \end{split}
    \end{equation*}
    Since $(\partial \overline{\partial} \beta)^{n-p,n-p}=\partial \overline{\partial} \widetilde{\beta}$, we get 
    \begin{equation*}
        0 = \int_{M} \sigma_{n-p} \Omega \wedge \partial \overline{\partial} \widetilde{\beta} > 0,
    \end{equation*}
    which is absurd. 
\end{proof}
In the following lemma we provide an obstruction to the existence of $p$-symplectic forms.
\begin{lemma}\label{No-existence of p-symplectic}
    Let $(M,J)$ be a compact complex manifold of complex dimension $n$. Suppose that $\beta$ is a $(2n-2p-1)$-form such that 
    \begin{equation*}
        d \beta = \sum_{i} f_{i} \psi^{i} \wedge \overline{\psi}^{i},
    \end{equation*}
    where $\psi^{i}$ are simple $(n-p,0)$-forms, and $f_{i}$ are functions on $M$ which have the same sign. Then $(M,J)$ does not admit a $p$-symplectic form. 
\end{lemma}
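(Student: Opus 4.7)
The plan is to mimic the argument of Lemma~\ref{No-existence of p-pluriclosed}, but exploit the stronger $d$-closedness of a $p$-symplectic form to make the argument considerably cleaner. Suppose for contradiction that $\Psi \in \Lambda^{2p}_{\R}M$ is a $p$-symplectic form on $(M,J)$, so $d\Psi=0$ and $\Psi^{p,p}$ is pointwise transverse. Since $\Psi \wedge \beta$ has degree $2n-1$, Stokes's theorem on the compact manifold $M$ gives $\int_M \sigma_{n-p}\, d(\Psi \wedge \beta)=0$. Expanding with the Leibniz rule, and using $d\Psi=0$ together with the evenness of $\deg \Psi = 2p$, this reduces to
\begin{equation*}
 0 = \int_M \sigma_{n-p}\, \Psi \wedge d\beta.
\end{equation*}

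Next I would perform a type decomposition. Since by hypothesis $d\beta = \sum_i f_i\,\psi^i \wedge \overline{\psi}^i$ is of pure bidegree $(n-p,n-p)$, only the component $\Psi^{p,p}$ of $\Psi$ contributes to the top-degree integrand (all other components $\Psi^{2p-k,k}$ with $k\neq p$ would wedge with $d\beta$ to give an element of degree different from $(n,n)$, hence zero). Thus the identity becomes
\begin{equation*}
 0 = \sum_i \int_M f_i\, \sigma_{n-p}\, \Psi^{p,p} \wedge \psi^i \wedge \overline{\psi}^i.
\end{equation*}

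The final step is a pointwise positivity argument coming directly from Definition~\ref{Definitions of positivity of forms}(1): since each $\psi^i$ is simple and $\Psi^{p,p}$ is transverse, at any point where $\psi^i\neq 0$ the $(n,n)$-form $\sigma_{n-p}\,\Psi^{p,p} \wedge \psi^i \wedge \overline{\psi}^i$ is strictly positive. Without loss of generality assume all $f_i \geq 0$ (otherwise replace $\beta$ by $-\beta$); then each integrand is a non-negative multiple of the volume form, and strictly positive on the (non-empty) open set where $d\beta \neq 0$, giving a strictly positive total integral---contradicting the vanishing obtained above.

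In comparison with the proof of Lemma~\ref{No-existence of p-pluriclosed}, I do not need to split $\beta$ into its bidegree components $(n-p,n-p-2)$, $(n-p-1,n-p-1)$, $(n-p-2,n-p)$ and discard two of them via separate Stokes-type arguments; the $d$-closedness of $\Psi$ lets the Leibniz rule handle everything in one line. Consequently the only potential obstacle is bookkeeping: confirming the Leibniz sign, the bidegree selection, and the pointwise positivity---none of which involves a delicate computation.
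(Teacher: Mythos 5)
Your proof is correct and is essentially identical to the paper's: Stokes's theorem together with $d\Psi=0$ reduces everything to $\int_M \sigma_{n-p}\,\Psi\wedge d\beta=0$, the pure bidegree $(n-p,n-p)$ of $d\beta$ selects the component $\Psi^{p,p}$, and transversality yields the strict positivity that gives the contradiction. The only caveat, present in both arguments and which you at least flag parenthetically, is that the conclusion implicitly needs $d\beta\not\equiv 0$ (equivalently, some $f_i\psi^i$ not identically zero) for the integral to be strictly positive.
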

\begin{proof}
    Let us suppose that $\{f_{i}\}$ are positive and that $\Omega$ is a $p$-symplectic form. Then 
    \begin{equation*}
        0 = \int_{M} \sigma_{n-p} d(\Omega \wedge \beta) = \int_{M} \sigma_{n-p} \Omega \wedge d\beta = \sum_{i} \int_{M} f_{i} \Omega \wedge \sigma_{n-p} \psi^{i} \wedge \overline{\psi}^{i} >0,
    \end{equation*}
    where the last relation holds true because $\Omega^{p,p}$, which is the only component of $\Omega$ that give a contribute to the integral, is transverse. Thus, all the integrals are strictly positive.
\end{proof}

\section{$p$-symplectic structures on holomorphically parallelizable manifolds}\label{Section 5}
In this Section, we study the existence of $p$-K\"ahler, $p$-pluriclosed and $p$-symplectic structures on compact holomorphically parallelizable manifolds. We recall that H. C. Wang, in \cite{W1954}, proved that a compact complex manifold $M$ is holomorphically parallelizable if and only if $M = \Gamma \backslash G$, where $G$ is a connected, simply-connected, complex Lie group and $\Gamma$ is lattice of $G$.

Firstly, we show that, on such manifolds, the existence of a real $(p,p)$-form that satisfies one of the definitions in \ref{Definitions of p-structures} implies the existence of real $(p,p)$-forms satisfying the other definitions of \ref{Definitions of p-structures}. Subsequently, we focus on compact holomorphically parallelizable manifold of type $IV$ or $V$ (see \cite{N1975} or the classification below). Let $M$ be a compact holomorphically parallelizable manifold of type $IV$ or $V$ which is not a torus or of type $V3)$. We already know, from \cite{AG1986}, that every left-invariant Hermitian metric on a compact holomorphically parallelizable manifold is a balanced metric. Hence, every left-invariant Hermitian metric provides $(n-1)$-symplectic (resp. $(n-1)$-K\"ahler, $(n-1)$-pluriclosed) structure. 
We will prove that $M$ cannot admit a $p$-symplectic (resp. $p$-K\"ahler, $p$-pluriclosed) structure if $1 < p < \text{dim}_{\C} M-1$.

Finally, we show that, if a compact holomorphically parallelizable manifold satisfies the $\partial \overline{\partial}$-Lemma, then it is a K\"ahler manifold (Theorem \ref{Compact holomorphically parallelizable manifolds and del-del bar lemma}). The theorem follows from an easy application of the characterization provided in Theorem \ref{Equivalence between p-forms on holomorphically parallelizable manifolds}. 

\begin{theorem}\label{Equivalence between p-forms on holomorphically parallelizable manifolds}
    Let $M$ be a compact holomorphically parallelizable manifold of complex dimension $n$. Let $1 \leq p \leq n$, then the following statements are equivalent:
    \begin{enumerate}
        \item $M$ admits a $p$-K\"ahler structure;
        \item $M$ admits a $p$-symplectic structure;
        \item $M$ admits a $p$-pluriclosed structure;
        \item there are no $(n-p,0)$-forms $\xi$, $\xi \neq 0$ such that $\xi$ is exact, simple and holomorphic.
    \end{enumerate}
\end{theorem}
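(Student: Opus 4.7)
My plan is to establish the cycle $(1) \Rightarrow (2) \Rightarrow (3) \Rightarrow (4) \Rightarrow (1)$. The first two implications hold on any compact complex manifold and are essentially bidegree bookkeeping. $(1) \Rightarrow (2)$ is immediate from the definitions. For $(2) \Rightarrow (3)$, given a $p$-symplectic form $\Omega$ the component $\Omega^{p,p}$ is transverse by hypothesis, and extracting the $(p+1,p)$-bidegree part of $d\Omega = 0$ gives $\partial \Omega^{p,p} = -\overline{\partial}\, \Omega^{p+1,p-1}$, whence $\partial \overline{\partial}\, \Omega^{p,p} = -\overline{\partial}\partial\, \Omega^{p,p} = \overline{\partial}^{2} \Omega^{p+1,p-1} = 0$, so $\Omega^{p,p}$ is $p$-pluriclosed.

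The central new step is $(3) \Rightarrow (4)$, which I would argue by contrapositive using Lemma~\ref{No-existence of p-pluriclosed}. Suppose $\xi \neq 0$ is a simple holomorphic exact $(n-p, 0)$-form, $\xi = d\eta$. Since $\overline{\partial}\xi = 0$ and $d^{2}\eta = 0$, one gets $\partial \xi = 0$; conjugating, $\partial \overline{\xi} = 0$. By Wang's theorem $M = \Gamma \backslash G$ with $G$ a simply connected complex Lie group, and $G$ is unimodular because it admits the lattice $\Gamma$. Hence averaging against the invariant measure on $M$ defines a symmetrization $\alpha \mapsto \widetilde{\alpha}$ that commutes with $d$ and fixes left-invariant forms. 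On a compact holomorphically parallelizable manifold every holomorphic $(k, 0)$-form is already invariant (its coefficients in the invariant coframe $\{\varphi^{i}\}$ are holomorphic functions, hence constant), so $\widetilde{\xi} = \xi$ and averaging $d\eta = \xi$ yields an invariant primitive $\widetilde{\eta}$ of $\xi$. Set $\eta_{0} := \widetilde{\eta}^{\,n-p-1,\,0}$. Because on a holomorphically parallelizable manifold the structure equations $d\varphi^{i}$ are of pure $(2, 0)$-type, every invariant $(*, 0)$-form is $\overline{\partial}$-closed; in particular $\overline{\partial} \eta_{0} = 0$. Comparing $(n-p, 0)$-parts of $d\widetilde{\eta} = \xi$ then gives $\partial \eta_{0} = \xi$.

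Now set $\beta := \eta_{0} \wedge \overline{\eta_{0}}$, a form of degree $2n-2p-2$ and bidegree $(n-p-1, n-p-1)$. Using $\overline{\partial}\eta_{0}=0$, $\partial \overline{\eta_{0}}=0$, $\partial \eta_{0}=\xi$, $\overline{\partial}\overline{\eta_{0}} = \overline{\xi}$, and $\partial \overline{\xi}=0$, a direct calculation gives
\begin{equation*}
    \partial \overline{\partial}\, \beta \;=\; (-1)^{n-p-1}\, \xi \wedge \overline{\xi},
\end{equation*}
which, in view of the bidegree of $\beta$, already equals $(\partial \overline{\partial}\, \beta)^{n-p,n-p}$. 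After replacing $\beta$ by $-\beta$ if necessary, this is of the form $f\, \psi \wedge \overline{\psi}$ with $\psi := \xi$ simple and $f > 0$, so Lemma~\ref{No-existence of p-pluriclosed} rules out the existence of a $p$-pluriclosed form, contradicting $(3)$. Finally, $(4) \Rightarrow (1)$ is the characterization of $p$-K\"ahler compact holomorphically parallelizable manifolds due to Alessandrini--Bassanelli \cite{AB1991}.

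The main obstacle I anticipate is the symmetrization step inside $(3) \Rightarrow (4)$: one must justify that a primitive of the invariant form $\xi$ can be chosen invariant, which is where unimodularity of $G$ and the commutation of averaging with $d$ enter (ultimately reducing to $\int_{M} X f\, d\mu = 0$ for left-invariant vector fields $X$). The subsequent $\partial \overline{\partial}$-computation is routine, but the signs must be tracked carefully in order to match the hypothesis of Lemma~\ref{No-existence of p-pluriclosed}.
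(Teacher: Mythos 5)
Your proposal is correct, and its core coincides with the paper's argument; the differences are in packaging rather than substance. The paper cites \cite{AB1991} for the full three-way equivalence of \emph{(1)}, \emph{(2)} and \emph{(4)} and then only adds \emph{(1)}$\Rightarrow$\emph{(3)} and \emph{(3)}$\Rightarrow$\emph{(4)}, whereas you close a genuine cycle \emph{(1)}$\Rightarrow$\emph{(2)}$\Rightarrow$\emph{(3)}$\Rightarrow$\emph{(4)}$\Rightarrow$\emph{(1)}, so you only need the single implication \emph{(4)}$\Rightarrow$\emph{(1)} from \cite{AB1991}; your bidegree argument for \emph{(2)}$\Rightarrow$\emph{(3)} (valid on any complex manifold) is an extra observation the paper does not record, since it proves only \emph{(1)}$\Rightarrow$\emph{(3)}. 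For the key step \emph{(3)}$\Rightarrow$\emph{(4)} both proofs symmetrize a primitive of $\xi$ using Wang's theorem, use that invariant $(\ast,0)$-forms are $\overline{\partial}$-closed to get $\xi=\partial\widetilde{\alpha}$ with $\overline{\partial}\widetilde{\alpha}=0$, and derive a contradiction between the positivity of $\int_M\sigma_{n-p}\,\Omega\wedge\xi\wedge\overline{\xi}$ and Stokes' theorem; the paper carries out the integration by parts explicitly, while you reroute the same computation through Lemma \ref{No-existence of p-pluriclosed} by exhibiting $\beta=\eta_0\wedge\overline{\eta_0}$ with $\partial\overline{\partial}\beta=(-1)^{n-p-1}\xi\wedge\overline{\xi}$. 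That is a tidier way to finish and makes the reliance on the obstruction lemma explicit, at the cost of a small amount of sign bookkeeping that you have handled correctly (one should also note, as you implicitly do, that the invariant form $\xi\neq 0$ is nowhere vanishing, so the lemma's strict positivity applies).
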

\begin{proof}
    The equivalence of {\em (1), (2)} and {\em (4)} is proved in \cite{AB1991}. We just prove that {\em (1)} implies {\em (3)} and {\em (3)} implies {\em (4)}.

    Let us suppose that $\Omega$ is a $p$-K\"ahler structure. Hence, $\Omega$ is a $(p,p)$-form which is transverse and $d \Omega = 0$. Then 
    \begin{equation*}
        d \Omega = \partial \Omega + \overline{\partial} \Omega = 0,
    \end{equation*}
    so $\overline{\partial} \Omega = 0$ and $\partial \overline{\partial} \Omega = 0$. Hence, $\Omega$ is a $p$-pluriclosed structure, thus {\em (1)} implies {\em (3)}.

    Suppose that $\Omega$ is a $p$-pluriclosed structure and that there exists a $(n-p,0)$-form $\xi$ such that $\xi \neq 0$, $\xi$ is holomorphic, simple and $\xi = d \alpha$, where $\alpha$ is a $(n-p-1)$-form. Then 
    \begin{equation}\label{Symmetrization of xi}
        \xi = \partial \alpha^{n-p-1,0},
    \end{equation}
    where $\alpha^{n-p-1,0}$ is the $(n-p-1,0)$ component of $\alpha$. \newline
    Given $\{\varphi^{1}, \dots, \varphi^{n}\}$ the holomorphic $(1,0)$-coframe of $M$, then $\{\varphi^{i_{1}} \wedge \dots \wedge \varphi^{i_{n-p-1}} \, | \newline i_{1} < \dots < i_{n-p-1}\}$ is a basis for $(n-p-1,0)$-forms. By H. C. Wang (see \cite{W1954}), $M$ is the compact quotient of a complex Lie group $G$ by a lattice $\Gamma \subseteq G$, then by a symmetrization process applied to both the left and right hand sides of \eqref{Symmetrization of xi}, we get 
    \begin{equation}
        \xi = \partial \widetilde{\alpha},
    \end{equation}
    where $\widetilde{\alpha}$ is the symmetrization of $\alpha^{n-p-1,0}$, $\xi$ being left-invariant. Moreover $\widetilde{\alpha}$ is holomorphic, hence, $\overline{\partial}\widetilde{\alpha}=0$. 
    Thus, the transversality of $\Omega$ implies that 
    \begin{equation*}
        \begin{split}
            0 < \int_{M} \sigma_{n-p} \, \Omega \wedge \partial \widetilde{\alpha} \wedge \overline{\partial \widetilde{\alpha}} & = \int_{M} \sigma_{n-p} \, \partial(\Omega \wedge  \widetilde{\alpha} \wedge \overline{\partial \widetilde{\alpha}}) - \int_{M} \sigma_{n-p} \, \partial \Omega \wedge  \widetilde{\alpha} \wedge \overline{\partial \widetilde{\alpha}} \\
            & = \int_{M} \sigma_{n-p} \, d(\Omega \wedge \widetilde{\alpha} \wedge \overline{\partial \widetilde{\alpha}}) - \int_{M} \sigma_{n-p} \, \partial \Omega \wedge \widetilde{\alpha} \wedge \overline{\partial \widetilde{\alpha}},
        \end{split}
    \end{equation*}
    where on the first line we use that $\partial \overline{\partial \widetilde{\alpha}} = 0$, while on the second line we can add $\int_{M} \sigma_{n-p} \, \overline{\partial}(\Omega \wedge \widetilde{\alpha} \wedge \overline{\partial \widetilde{\alpha}})$ because $\Omega \wedge \widetilde{\alpha} \wedge \overline{\partial \widetilde{\alpha}}$ is a $(n-1,n)$-form.
    
    Thus 
    \begin{equation*}
        \begin{split}
            0 < \int_{M} \sigma_{n-p} \, \Omega \wedge \partial \widetilde{\alpha} \wedge \overline{\partial \widetilde{\alpha}} &  = - \int_{M} \sigma_{n-p} \, \partial \Omega \wedge \widetilde{\alpha} \wedge \overline{\partial \widetilde{\alpha}} \\
            & = \pm  \int_{M} \sigma_{n-p} \, \overline{\partial} (\partial \Omega \wedge \widetilde{\alpha} \wedge \overline{\widetilde{\alpha}}) \pm \int_{M} \sigma_{n-p} \, \overline{\partial} \partial \Omega \wedge  \widetilde{\alpha} \wedge \overline{\widetilde{\alpha}}  \\
            & = \pm \int_{M} \sigma_{n-p} \, d(\partial \Omega \wedge \widetilde{\alpha} \wedge \widetilde{\alpha}) \pm \int_{M} \sigma_{n-p} \, \overline{\partial}\partial \Omega \wedge \widetilde{\alpha} \wedge \widetilde{\alpha} = 0,
        \end{split}
    \end{equation*}
    The signs of the integrals of the above equation depends on the dimension of the manifold and on $p$. We do not analyze every single case because the signs of the integrals are not relevant to the argument. Finally, {\em (3)} implies {\em (4)}.
\end{proof}
\begin{remark}
    The fact that a $p$-K\"ahler structure is also a $p$-pluriclosed structure holds true for every complex manifold. 
\end{remark}

The next part of this Section is devoted to studying the existence of $p$-K\"ahler, $p$-symplectic and $p$-pluriclosed structures on compact holomorphically parallelizable manifolds, in accordance with the classification by I. Nakamura (see \cite{N1975}). For the sake of completeness, we remind below the classification of complex solvable Lie algebras of dimension $4$ and $5$  see \cite[pp.108-109]{N1975}.
\renewcommand{\arraystretch}{1.5}
\begin{center}
Type $IV$
\begin{tabular}{  m{1em} | m{1.5cm}| m{9.5cm} | } 
  \hline
  1) & abelian & $d \varphi^{1} = d \varphi^{2} = d \varphi^{3} = d \varphi^{4} = 0$; \\ 
  \hline
  2) & nilpotent & $d \varphi^{1} = d \varphi^{2} = d \varphi^{3} = 0, \enskip d \varphi^{4} = - \varphi^{23}$; \\
  \hline
  3) & nilpotent & $d \varphi^{1} = d \varphi^{2} = 0, \enskip  d\varphi^{3} = - \varphi^{12}, \enskip d \varphi^{4} = -2 \varphi^{13}$; \\
  \hline
  4) & solvable & $d \varphi^{1} = d \varphi^{2} = 0, \enskip d \varphi^{3} = \varphi^{23}, \enskip d \varphi^{4} = - \varphi^{24}$; \\
  \hline
  5) & solvable & $d \varphi^{1}=0, \enskip d \varphi^{2} = \varphi^{12} , \enskip d \varphi^{3} = \alpha \varphi^{13} , \enskip d \varphi^{4} = - (1 + \alpha)\varphi^{14}$; \\
  \hline
  6) & solvable & $d \varphi^{1}=0, \enskip d \varphi^{2} = \varphi^{12}, \enskip d \varphi^{3} = - \varphi^{13}, \enskip d \varphi^{4} = - \varphi^{23}$; \\
  \hline 
  7) & solvable & $d \varphi^{1}=0, \enskip d \varphi^{2} = \varphi^{12}, \enskip \varphi^{3} = - 2 \varphi^{13}, \enskip d \varphi^{4}= \varphi^{14}- \varphi^{12}$; \\
  \hline
\end{tabular}
\end{center}
where $\alpha \in \C$ such that $\alpha(1+\alpha) \neq 0$.
\renewcommand{\arraystretch}{1.5}
\begin{center}
Type $V$
\begin{tabular}{  m{1em} | m{1.5cm}| m{9.5cm} | } 
  \hline
  1) & abelian & $d \varphi^{1} = d \varphi^{2} = d \varphi^{3} = d \varphi^{4} = d \varphi^{5} = 0$; \\ 
  \hline
  2) & nilpotent & $d \varphi^{1} = d \varphi^{2} =d \varphi^{3} = d \varphi^{4} = 0, \enskip  d\varphi^{5} = - \varphi^{34}$; \\
  \hline
  3) & nilpotent & $d \varphi^{1} = d \varphi^{2} = d \varphi^{3} = d \varphi^{4} = 0, \enskip d \varphi^{5} = - \varphi^{13} - \varphi^{24}$; \\
  \hline
  4) & nilpotent & $d \varphi^{1} = d \varphi^{2} = d \varphi^{3} = 0, \enskip d \varphi^{4} = -\varphi^{12}, \enskip d \varphi^{5} = - \varphi^{13}$; \\
  \hline
  5) & nilpotent & $d \varphi^{1} = d \varphi^{2} = d \varphi^{3} = 0, \enskip d \varphi^{4} = -\varphi^{23} , \enskip d \varphi^{5} = -2 \varphi^{24}$; \\
  \hline
  6) & nilpotent & $d \varphi^{1} = d \varphi^{2} = d \varphi^{3} = 0, \enskip d \varphi^{4} = -\varphi^{12}, \enskip d \varphi^{5} = -2 \varphi^{14} - \varphi^{23}$; \\
  \hline 
  7) & solvable & $d \varphi^{1} = d \varphi^{2} = d \varphi^{3} = 0, \enskip d \varphi^{4} = \varphi^{34}, \enskip \varphi^{5} = - \varphi^{35}$; \\
  \hline
  8) & nilpotent & $d \varphi^{1} = d \varphi^{2} = 0, \enskip d \varphi^{3} = -\varphi^{12}  , \enskip d \varphi^{4} = -2 \varphi^{13} , \enskip d \varphi^{5} = -2 \varphi^{23}$; \\
  \hline 
  9) & nilpotent & $d \varphi^{1} = d \varphi^{2} = 0, \enskip d \varphi^{3} = - \varphi^{12} , \enskip d \varphi^{4} = -2 \varphi^{13}, \enskip d \varphi^{5} = -3 \varphi^{14}$; \\
  \hline
  10) & nilpotent & $d \varphi^{1} = d \varphi^{2} = 0, \enskip d \varphi^{3} = - \varphi^{12} , \enskip d \varphi^{4} = -2 \varphi^{13},$ \\
  & & $ d \varphi^{5} = -3 \varphi^{14} - \varphi^{23}$; \\
  \hline 
  11) & solvable  & $d \varphi^{1} = d \varphi^{2} = 0, \enskip d \varphi^{3} = -\varphi^{12} , \enskip d \varphi^{4} = \varphi^{14}, \enskip d \varphi^{5} = \varphi^{15};$ \\
  \hline 
  12) & solvable & $d \varphi^{1} = d \varphi^{2} = 0, \enskip d \varphi^{3} = \varphi^{13} , \enskip d \varphi^{4} = \varphi^{24},$ \\
  & & $d \varphi^{5} = - (\varphi^{1} + \varphi^{2}) \wedge \varphi^{5};$ \\
  \hline
\end{tabular}
\end{center}

\renewcommand{\arraystretch}{1.5}
\begin{center}
Type $V$
\begin{tabular}{  m{1em} | m{1.5cm}| m{9.5cm} | } 
    \hline
    13) & solvable & $d \varphi^{1} = d \varphi^{2} = 0, \enskip d \varphi^{3} = \varphi^{23} , \enskip d \varphi^{4} = \alpha \varphi^{24}, \enskip d \varphi^{5} = -(1 + \alpha)\varphi^{25}$; \\
    \hline
    14) & solvable & $d \varphi^{1} = d \varphi^{2} = 0, \enskip d \varphi^{3} = \varphi^{13} , \enskip d \varphi^{4} = -2 \varphi^{14}, \enskip d \varphi^{5} = \varphi^{15} - \varphi^{13}$; \\
    \hline 
    15) & solvable & $d \varphi^{1} = d \varphi^{2} = 0, \enskip d \varphi^{3} = \varphi^{23} , \enskip d \varphi^{4} = - \varphi^{24}, \enskip d \varphi^{5} = - \varphi^{34}$; \\
    \hline
    16) & solvable & $d \varphi^{1} = d \varphi^{2} = 0, \enskip d \varphi^{3} = \varphi^{13} , \enskip d \varphi^{4} = - \varphi^{14}, \enskip d \varphi^{5} = -\varphi^{34}-\varphi^{12}$; \\
    \hline
    17) & solvable & $d \varphi^{1} = 0, \enskip d \varphi^{2} = \varphi^{12}, \enskip d \varphi^{3} = \gamma \varphi^{13}, \enskip d \varphi^{4} = \beta \varphi^{14},$ \\
    & & $d \varphi^{5} = -(1 + \gamma + \beta) \varphi^{15}$; \\
    \hline
    18) & solvable & $d \varphi^{1} = 0, \enskip d \varphi^{2} = -3 \varphi^{12}, \enskip d \varphi^{3} = \varphi^{13}, \enskip d \varphi^{4} = \varphi^{14} - \varphi^{13},$ \\
      & & $d \varphi^{5} = \varphi^{15} - \varphi^{13}$; \\
      \hline
    19) & solvable & $d \varphi^{1} = 0, \enskip d \varphi^{2} = \varphi^{12}, \enskip d \varphi^{3} = - \varphi^{13}, \enskip d \varphi^{4} = \varphi^{14} - \varphi^{12},$ \\
    & & $d \varphi^{5} = - \varphi^{15} - \varphi^{13}$; \\ 
    \hline 
    20) & solvable & $d \varphi^{1} = 0,    \enskip d \varphi^{2} = \varphi^{12}, \enskip d \varphi^{3} = \varphi^{13} - \varphi^{12}, \enskip d \varphi^{4} = \eta \varphi^{14},$ \\
          & & $d \varphi^{5} = -(2 + \eta)\varphi^{15}$; \\
    \hline
\end{tabular}
\end{center}
where $\alpha, \beta, \gamma, \eta \in \C$ such that $\alpha(1+\alpha) \neq 0$, $\gamma \beta(1 + \gamma + \beta) \neq 0$ and $\eta(2 + \eta) \neq 0$. As the author points out, there are no compact holomorphically parallelizable manifolds of types $IV\, 7), V \, 15)$ and $V\, 18)$. Meanwhile, he does not know if compact parallelizable manifolds of types $IV \, 5), V \, 11), V \, 13), V \, 16), V \, 19), V \, 20)$ exist.

\medskip 

From \cite{AG1986}, we know that all compact holomorphically parallelizable manifolds carry balanced metrics. Meanwhile, the only ones of type $IV$ and $V$ that carry a K\"ahler metric are the tori of complex dimension $4$ and $5$. We aim to investigate what happens in the other cases.

An interesting holomorphically parallelizable manifold is $\eta \beta_{2n+1}$, which is a nilmanifold that generalizes the Iwasawa manifold to higher dimensions (see \cite{AB1991}). In according to the classification by I. Nakamura, $\eta \beta_{5} \doteq \Gamma \backslash G$, where $\Gamma$ is a lattice, is a holomorphically parallelizable nilmanifold of type $V \, 3)$. The Lie group $G$ can be identified with $(\C^{5}, \ast)$, where 
\begin{equation*}
    (w_{1}, \dots, w_{5}) \ast (z_{1},\dots, z_{5}) \doteq (w_{1} + z_{1}, \dots, w_{5} + z_{5} + w_{1}z_{3} + w_{2}z_{4}),
\end{equation*} 
and $\Gamma$ is the lattice formed by Gaussian integers. Moreover, we can easily see that we can choose a basis of left-invariant $(1,0)$-forms $\{\varphi^{1}, \dots, \varphi^{5}\}$ such that 
\begin{equation*}
    d \varphi^{1} = \dots = d \varphi^{4} = 0, \quad d \varphi^{5} = - \varphi^{13} - \varphi^{24}.
\end{equation*}
L. Alessandrini and G. Bassanelli, in \cite{AB1991}, proved that $\eta \beta_{2n+1}$ carries a $p$-K\"ahler form for $n+1 \leq p \leq 2n+1$ and no $p$-K\"ahler forms for $1 \leq p \leq n$.

In order to provide an alternative proof of the existence of a $3$-K\"ahler form on $\eta \beta_{5}$, we provide a characterization of transverse $(2,2)$-forms in $\C^{4}$. This characterization, together with Proposition \ref{Transversal (2,2)-form}, is a part of a forthcoming paper by F. Fagioli and A. Mainenti (\cite{FM2024}), and it was the topic of a discussion of the first author with A. Mainenti.

Let us consider $\C^{4}$ equipped with the following basis of $(1,0)$-forms $\{\omega^{1}, \dots, \omega^{4}\}$. The volume form is $\text{Vol} = \sigma_{4} \omega^{1234 \overline{1234}} = \frac{1}{2^{4}} \omega^{1234 \overline{1234}}$. We set a basis for $\Lambda^{2,0}\C^{4}$ and we denote it by $\{\Omega_{1}, \dots, \Omega_{6}\}$, where
\begin{equation*}
    \begin{split}
        & \Omega^{1} \doteq \varphi^{12}, \quad \Omega^{2} \doteq \varphi^{13}, \quad \Omega^{3} \doteq \varphi^{14}, \\
        &\Omega^{4} \doteq \varphi^{23}, \quad \Omega^{5} \doteq - \varphi^{24}, \quad \Omega^{6} \doteq \varphi^{34}.
    \end{split}
\end{equation*}
Then 
\begin{equation*}
    \Omega^{j} \wedge \Omega^{k} = \begin{cases}
        \omega^{1234}, \quad \text{if} \, \,  k = 7-j, \\
        0, \quad \text{otherwise},
    \end{cases}
\end{equation*}
thus, we can associate to a $(2,2)$-form $\alpha$ a matrix $6 \times 6$, $A = (a_{jk})_{j,k=1,\dots, 6}$, defined by 
\begin{equation*}
    \alpha = \sum_{j,k} a_{jk} \Omega_{j} \wedge \overline{\Omega_{k}}.
\end{equation*}
Once we have fixed these notations, we can observe that \cite[Theorem 3]{BP2013} can be easily adapted to our definition of transversality. The authors proved that, for $(2,2)$-forms in $\C^{4}$, the weakly positive transversality, i.e., $\alpha \wedge \sigma_{2} \beta \wedge \overline{\beta} = c \text{Vol}$ with $c \geq 0$, is equivalent to 
\begin{equation*}
    \overline{z} A z^{t} \geq 0, \quad \forall z \in C^{6} \, \, \text{such} \, \, \text{that} \, \, z \in Q,
\end{equation*}
where $Q: z_{1}z_{6} + z_{2}z_{5} + z_{3}z_{4} = 0$. The proof can be easily adapted to demonstrate that $\alpha$ is transverse if and only if  
\begin{equation*}
    \overline{z} A z^{T} > 0, \quad \forall z \in Q, z \neq 0.
\end{equation*}
The following proposition adapts \cite[Proposition 4]{BP2013} to transversality. Furthermore, it introduces two additional families of transverse $(2,2)$-forms that were not highlighted in \cite{BP2013}. Since we will use this proposition, for the sake of completeness, we provide a proof of it.

\begin{proposition}\label{Transversal (2,2)-form}
    The $(2,2)$-form 
    \begin{equation}
        \Omega_{a} = \sum_{l=1}^{6}\Omega_{l} \wedge \overline{\Omega_{l}} + a \Omega_{i} \wedge \overline{\Omega_{j}} + \overline{a} \Omega_{j} \wedge \overline{\Omega_{i}}
    \end{equation}
    for $a \in \C$ and $(i,j)=(1,6),(2,5),(3,4)$ is transverse if and only if $|a| < 2$.
\end{proposition}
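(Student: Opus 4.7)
The plan is to use the Hermitian-matrix criterion recalled just before the statement: $\Omega_a$ is transverse if and only if the associated $6\times 6$ Hermitian matrix $A$ satisfies $\overline{z} A z^T > 0$ for every nonzero $z \in \C^6$ on the quadric $Q\colon z_1 z_6 + z_2 z_5 + z_3 z_4 = 0$. The three admissible pairs $(1,6),(2,5),(3,4)$ enter $Q$ symmetrically, so by a permutation of coordinates one reduces to $(i,j)=(1,6)$. Then $A = I + a E_{16} + \overline{a}\, E_{61}$, and a direct computation gives
\begin{equation*}
\overline{z} A z^T \;=\; \sum_{l=1}^{6} |z_l|^2 \;+\; 2\,\mathrm{Re}(a\,\overline{z_1}\, z_6).
\end{equation*}

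\emph{Necessity of $|a|<2$.} Writing $a=|a|e^{i\theta}$, I would test the form on the explicit vector $z=(1,\,1,\,0,\,0,\,e^{-i\theta},\,-e^{-i\theta})$. One checks $z_1 z_6 + z_2 z_5 = -e^{-i\theta}+e^{-i\theta}=0$, so $z\in Q\setminus\{0\}$, and substitution yields $\overline{z} A z^T = 4 - 2|a|$. This is non-positive as soon as $|a|\ge 2$, hence transversality forces $|a|<2$.

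\emph{Sufficiency.} Assume $|a|<2$ and let $z\in Q\setminus\{0\}$. Write $u=|z_1||z_6|$, $s=|z_1|^2+|z_6|^2$, and $t=|z_2|^2+|z_3|^2+|z_4|^2+|z_5|^2$. AM--GM applied to $s$ gives $2u\le s$; the quadric constraint combined with the triangle inequality and two further applications of AM--GM gives
\begin{equation*}
2u \;=\; 2|z_1 z_6| \;=\; 2|z_2 z_5 + z_3 z_4| \;\le\; (|z_2|^2+|z_5|^2)+(|z_3|^2+|z_4|^2) \;=\; t.
\end{equation*}
Adding the two estimates, $4u \le s+t$, whence
\begin{equation*}
\overline{z} A z^T \;\ge\; s+t - 2|a|\,u \;\ge\; 4u - 2|a|\,u \;=\; 2(2-|a|)\,u.
\end{equation*}
If $u>0$ this is strictly positive. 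If $u=0$, then $z_1=0$ or $z_6=0$, the cross term vanishes, and $\overline{z} A z^T = s+t>0$ since $z\neq 0$. Thus $\Omega_a$ is transverse.

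\emph{Main obstacle.} The delicate point is the sufficiency estimate: relying only on AM--GM for $s$ would yield transversality merely for $|a|<1$. The crucial ingredient that raises the threshold to $|a|<2$ is the quadric constraint, which produces the second inequality $2u\le t$; combining the two bounds is precisely what gives the sharp condition and matches the value $|a|=2$ found in the necessity step.
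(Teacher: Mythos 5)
Your proof is correct and follows essentially the same strategy as the paper's: both reduce to the quadratic form $\overline{z}Az^{t}$ on the quadric $Q$, prove sufficiency by combining AM--GM, the triangle inequality and the quadric constraint to extract the factor $2(2-|a|)$ times the relevant product modulus (handling the degenerate case $u=0$ separately), and prove necessity with a test vector on $Q$. The only cosmetic differences are that you work with the pair $(1,6)$ rather than $(2,5)$ and exhibit a fully explicit necessity vector where the paper describes one by its constraints.
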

\begin{proof}
    Here, we provide a proof for $(i,j)=(2,5)$, the other cases follow in a similar manner. 

    Suppose that $|a| < 2$. We aim to show that $\overline{z}Az^{t} > 0$, for $z \in Q$. 
    
    Since 
    \begin{equation*}
        \begin{split}
            \overline{z}Az^{t} & = |z|^{2} + 2 \text{Re}(a\overline{z}_{2} z_{5}) \\
            & \geq 2|z_{1}z_{6}| + 2|z_{2}z_{5}| + 2|z_{3}z_{4}| + 2 \text{Re}(a\overline{z}_{2} z_{5}) \\
            & \geq 2|z_{1}z_{6} + z_{3}z_{4}| + 2|z_{2}z_{5}| - 2|a z_{2} z_{5}| \\
            & = 4 |z_{2}z_{5}| - 2|a z_{2} z_{5}| = 2(2-|a|)|z_{2}z_{5}|,
        \end{split}
    \end{equation*}
    then, in order to conclude that $\overline{z}Az^{t} > 0$ in $Q$, we just have to study the case $|z_{2}z_{5}| = 0$.
    
    If $|z_{2}z_{5}| = 0$, then $|z_{2}| = 0$ or $|z_{5}| = 0$ and in both cases $ \overline{z}Az^{t} = |z|^{2}$, thus $\overline{z}Az^{t} > 0$, unless $z=0$.
    Hence, if $|a| < 2$ then $\Omega_{a}$ is tranverse.

    Furthermore, if $\Omega_{a}$ is transverse, then by taking $z \in Q$ such that $\overline{z_{2}}z_{5} = - \overline{a}$, $|z_{2}|=|z_{5}|, |z_{1}| = |z_{6}|$, $|z_{3}| = |z_{4}|$, $z_{1}z_{6}$ and $z_{3}z_{4}$ linearly dependent, we get 
    \begin{equation*}
        \begin{split}
            \overline{z}Az^{t} & = |z|^{2} + 2 \text{Re}(a\overline{z}_{2} z_{5}) = |z|^{2} - 2 |a|^{2} \\
            & = 2|z_{1}z_{6}| + 2|z_{2}z_{5}| + 2|z_{3}z_{4}| - 2 |a|^{2} \\
            & = 2|z_{1}z_{6} + z_{3}z_{4}| + 2|z_{2}z_{5}| - 2|a|^{2} \\
            & =  4|a| - 2|a|^{2} = 2 |a| (2 - |a|),
        \end{split}
    \end{equation*}
    thus the thesis. 
    The other cases follow in the same way.
\end{proof}

Here, we give a different proof of $3$-K\"ahlerianity of the complex manifold $\eta \beta_{5}$ by explicitly constructing a $3$-K\"ahler form on it.

\begin{proposition}
    $\eta \beta_{5}$ admits a $3$-K\"ahler form.
\end{proposition}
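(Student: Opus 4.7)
The plan is to exhibit a concrete $(3,3)$-form on $\eta\beta_{5}$ and verify it is $d$-closed and transverse by leveraging Proposition \ref{Transversal (2,2)-form}. The structure equation $d\varphi^{5}=-\varphi^{13}-\varphi^{24}$ has the crucial feature that among all $\varphi^{K}$ with $|K|=3$, only $K=(1,3,5)$ and $K=(2,4,5)$ have nonzero differential, and in fact $d\varphi^{135}=d\varphi^{245}=\varphi^{1234}$. Consequently $\varphi^{135}-\varphi^{245}$ is a closed $(3,0)$-form, and this observation forces both the shape of the candidate and the appearance of a cross term. Concretely, my candidate is
\[
\Omega \doteq \sigma_{3}\sum_{\substack{K\subset\{1,\dots,5\}\\|K|=3}} \varphi^{K}\wedge\bar\varphi^{K} \;-\; \sigma_{3}\,\varphi^{135}\wedge\bar\varphi^{245} \;-\; \sigma_{3}\,\varphi^{245}\wedge\bar\varphi^{135},
\]
which is manifestly real.

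For closedness, only the terms involving $\varphi^{135}$ or $\varphi^{245}$ contribute to $d\Omega$, producing the four $(4,3)$- and $(3,4)$-pieces $\pm\varphi^{1234}\wedge\bar\varphi^{135}$, $\pm\varphi^{1234}\wedge\bar\varphi^{245}$, $\pm\varphi^{135}\wedge\bar\varphi^{1234}$, $\pm\varphi^{245}\wedge\bar\varphi^{1234}$; the contributions from the two diagonal terms $\varphi^{135}\bar\varphi^{135}$, $\varphi^{245}\bar\varphi^{245}$ exactly cancel those of the two cross terms, thanks to $d(\varphi^{135}-\varphi^{245})=0$.

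For transversality, I would split $\Omega = \sigma_{1}\varphi^{5\bar 5}\wedge\omega_{55}' + \omega_{0}''$, where $\omega_{55}'$ is a $(2,2)$-form and $\omega_{0}''$ is a $(3,3)$-form in the $\C^{4}$-slice spanned by $\varphi^{1},\dots,\varphi^{4}$. Using $\sigma_{3}=\sigma_{1}\sigma_{2}$ and the identifications $\varphi^{ij5}\wedge\bar\varphi^{ij5}=\Omega_{l}\wedge\bar\Omega_{l}\wedge\varphi^{5\bar 5}$ for the corresponding $\Omega_{l}$ of Section \ref{Section 5}, together with $-\varphi^{135}\wedge\bar\varphi^{245}=\Omega_{2}\wedge\bar\Omega_{5}\wedge\varphi^{5\bar 5}$ and its conjugate, a direct computation gives
\[
\omega_{55}' = \sigma_{2}\!\left(\sum_{l=1}^{6}\Omega_{l}\wedge\bar\Omega_{l} + \Omega_{2}\wedge\bar\Omega_{5} + \Omega_{5}\wedge\bar\Omega_{2}\right),
\]
which is precisely $\sigma_{2}\Omega_{a}$ in Proposition \ref{Transversal (2,2)-form} with $(i,j)=(2,5)$ and $a=1$; since $|1|<2$, the form $\omega_{55}'$ is transverse in $\C^{4}$. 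Moreover, $\omega_{0}''=\sigma_{3}\sum_{K\subset\{1,2,3,4\},\,|K|=3}\varphi^{K}\wedge\bar\varphi^{K}$ equals $\tfrac{1}{6}\omega_{4}^{3}$ for the standard Hermitian metric $\omega_{4}=\sigma_{1}\sum_{i=1}^{4}\varphi^{i\bar i}$, so it is transverse as an $(n-1,n-1)$-form on $\C^{4}$.

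Finally, I would verify pointwise transversality of $\Omega$ in $\C^{5}$. Any simple $\beta\in\Lambda^{2,0}T^{\ast}_{x}M$ decomposes as $\beta=\delta+\gamma\wedge\varphi^{5}$ with $\delta\in\Lambda^{2,0}\C^{4}$ simple (possibly zero) and $\gamma\in\Lambda^{1,0}\C^{4}$ satisfying $\gamma\wedge\delta=0$. The cross terms in $\beta\wedge\bar\beta$ (the pieces $\delta\wedge\bar\gamma\wedge\bar\varphi^{5}$ and $\gamma\wedge\bar\delta\wedge\varphi^{5}$) are killed when wedged with $\Omega$, because the $\sigma_{1}\varphi^{5\bar 5}\wedge\omega_{55}'$-part produces a squared factor $(\bar\varphi^{5})^{2}$ or $(\varphi^{5})^{2}$, while the $\omega_{0}''$-part has too many holomorphic (or antiholomorphic) indices in $\C^{4}$. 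What survives is $\sigma_{2}\Omega\wedge\beta\wedge\bar\beta=(\sigma_{2}\omega_{55}'\wedge\delta\wedge\bar\delta)+(\sigma_{1}\omega_{0}''\wedge\gamma\wedge\bar\gamma)$ (both expressed as multiples of $\mathrm{Vol}$ via $\mathrm{Vol}=\sigma_{1}\varphi^{5\bar 5}\wedge\mathrm{Vol}_{4}$). Both summands are nonnegative by the transversality results just recorded, and since $\beta\neq 0$ forces $\delta\neq 0$ or $\gamma\neq 0$, the sum is strictly positive.

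The main obstacle is the bookkeeping in the last step: one has to check that the simplicity condition on $\beta\in\Lambda^{2,0}\C^{5}$ translates into $\delta$ being simple in $\C^{4}$ (so that Proposition \ref{Transversal (2,2)-form} applies) and that the apparently worrying $(\delta,\gamma)$ cross terms truly vanish. The conceptual surprise, and the reason the construction works at all, is that the closedness obstruction (forced by $d\varphi^{135}=d\varphi^{245}=\varphi^{1234}$) produces exactly the off-diagonal entry $\Omega_{2}\wedge\bar\Omega_{5}+\Omega_{5}\wedge\bar\Omega_{2}$ with coefficient $a=1$, which lies safely inside the transversality window $|a|<2$ of Proposition \ref{Transversal (2,2)-form}.
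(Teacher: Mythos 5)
Your proposal is correct and follows essentially the same route as the paper: the same candidate form $\sigma_{3}\big(\sum_{i<j<k}\varphi^{ijk\overline{ijk}}-\varphi^{135\overline{245}}-\varphi^{245\overline{135}}\big)$, closedness by the cancellation forced by $d\varphi^{135}=d\varphi^{245}=\varphi^{1234}$, and transversality via the splitting into $\varphi^{5\overline{5}}\wedge(F+\Theta)$ (the $a=1$, $(i,j)=(2,5)$ case of Proposition \ref{Transversal (2,2)-form}) plus the cube of the diagonal metric on the $\C^{4}$-slice. If anything, your final step is slightly more careful than the paper's, since you treat a general simple $\beta=\delta+\gamma\wedge\varphi^{5}$ with both components possibly nonzero rather than splitting into the two extreme cases.
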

\begin{proof} The only non trivial structure equation is given by
    $$d \varphi^{5} = - \varphi^{13} - \varphi^{24};$$ 
    then 
    \begin{equation*}
        d \varphi^{5 \overline{5}} = - \varphi^{13 \overline{5}} - \varphi^{24 \overline{5}} + \varphi^{5 \overline{13}} + \varphi^{5 \overline{24}}. 
    \end{equation*}
    Thus 
    \begin{equation*}
        \begin{split}
            & d \varphi^{ijk \overline{ijk}} = 0, \quad \forall i,j,k = 1,\ldots, 4,\quad i < j < k, \\
            & d \varphi^{rs5 \overline{rs5}} = 0,
        \end{split}
    \end{equation*}
    for $(r,s) = (1,2),(1,4),(2,3),(3,4)$, and 
    \begin{equation*}
        d \varphi^{135 \overline{135}} = \varphi^{1234 \overline{135}} - \varphi^{135 \overline{1234}}, \quad d \varphi^{245 \overline{245}} = \varphi^{1234 \overline{245}} - \varphi^{245 \overline{1234}} .
    \end{equation*}
    Define 
    \begin{equation*}
        \Omega \doteq \sigma_{3} \big( \sum_{i<j<k} \varphi^{ijk \overline {ijk}} - \varphi^{135 \overline{245}} - \varphi^{245 \overline{135}} \big).
    \end{equation*}
    \underline{Claim}: 
    $\Omega$ is a $3$-K\"ahler form.\newline
    Since 
    \begin{equation*}
        d \varphi^{245 \overline{135}} = \varphi^{1234 \overline{135}} - \varphi^{245 \overline{1234}}, \quad d \varphi^{135 \overline{245}} = \varphi^{1234 \overline{245}} - \varphi^{135 \overline{1234}},
    \end{equation*}
    then 
    \begin{equation*}
        d \Omega = \sigma_{3} d \big(\sum_{i<j<k}\varphi^{ijk \overline{ijk}} \big) - \sigma_{3} d \big(\varphi^{135 \overline{245}} + \varphi^{245 \overline{135}} \big) = 0,
    \end{equation*}
    hence $\Omega$ is $d$-closed. 
    
    We have to prove that $\Omega$ is transverse. In order to prove it we use Proposition \ref{Transversal (2,2)-form}. Let $\{V_{1}, \dots, V_{5}\}$ be the dual basis of $\{\varphi^{1}, \dots, \varphi^{5}\}$, then, we can decompose the complexified Lie algebra $\mathfrak{g}_{\C}$ of $\eta \beta_{5}$ in the following way 
    \begin{equation*}
        \mathfrak{g}_{\C} = \mathfrak{h}_{\C} \oplus \langle V_{5}, \overline{V_{5}} \rangle,
    \end{equation*}
    where $\mathfrak{h}_{\C} \doteq \mathfrak{h}_{1,0} \oplus \overline{\mathfrak{h}_{1,0}}$, where $\mathfrak{h}_{1,0} \doteq \langle V_{1}, \dots, V_{4}\rangle$. 
    
    Since $\Omega \doteq \sigma_{3} \big( \sum_{i<j<k} \varphi^{ijk \overline {ijk}} - \varphi^{135 \overline{245}} - \varphi^{245 \overline{135}} \big)$, then 
    \begin{equation*}
        \Omega = \Omega|_{\mathfrak{h}_{\C}} + \sigma_{3} \varphi^{5 \overline{5}} \wedge \big(F + \Theta \big),
    \end{equation*}
    where 
    \begin{equation*}
        \begin{split}
            & \Omega|_{\mathfrak{h}_{\C}} \doteq \sigma_{3} (\varphi^{123 \overline{123}} + \varphi^{124 \overline{124}} + \varphi^{134 \overline{134}} + \varphi^{234 \overline{234}} ), \\
            & F \doteq \big(\varphi^{12 \overline{12}} + \varphi^{13 \overline{13}} + \varphi^{14 \overline{14}} + \varphi^{23 \overline{23}} + \varphi^{24 \overline{24}} + \varphi^{34 \overline{34}} \big), \quad \Theta \doteq - \big(\varphi^{13 \overline{24}} + \varphi^{24 \overline{13}} \big).
        \end{split}
    \end{equation*}
    Up to a scalar positive real term, $\Omega|_{\mathfrak{h}_{\C}}$ is the third power of the fundamental form of the diagonal metric on $\mathfrak{h}_{\C}$, thus it is transverse on $\mathfrak{h}_{\C}$, while $F+\Theta$ is the $(2,2)$-form described in Proposition \ref{Transversal (2,2)-form} with $a = 1$, thus it is transverse on $\mathfrak{h}_{\C}$. 
    
    Let $\eta \in \Lambda^{2,0}(\mathfrak{g}^{\ast}_{\C})$ be simple, then we have the following cases: 
    \begin{enumerate}
        \item $\eta \in \Lambda^{2,0}(\mathfrak{h}_{\C}^{\ast})$;
        \item $\eta \in \Lambda^{2,0}(\mathfrak{g}_{\C}^{\ast}) \setminus \Lambda^{2,0}(\mathfrak{h}_{\C}^{\ast})$.
    \end{enumerate}
    Thus, in the first case 
    \begin{equation*}
        \eta \doteq (a_{1} \varphi^{1} + \dots + a_{4} \varphi^{4}) \wedge (b_{1} \varphi^{1} + \dots + b_{4} \varphi^{4}),
    \end{equation*}
    so $\Omega|_{\mathfrak{h}_{\C}} \wedge \sigma_{2} \eta \wedge \overline{\eta} = 0$, because it is a $(5,5)$-form on $\mathfrak{h}_{\C}$. While, $\sigma_{3} \varphi^{5 \overline{5}} \wedge \big(F + \Theta \big) \wedge \sigma_{2} \eta \wedge \overline{\eta} = c \text{Vol}$, with $c>0$ because 
    \begin{equation*}
        \big(F + \Theta \big) \wedge \sigma_{2} \eta \wedge \overline{\eta} = \widetilde{c}\text{Vol}_{\mathfrak{h}_{\C}},
    \end{equation*}
    where $\text{Vol}_{\mathfrak{h}_{\C}}$ is the volume form on $\mathfrak{h}_{\C}$ and $\widetilde{c} > 0$ because $(F+ \Theta)$ is transverse on $\mathfrak{h}_{\C}$. 
    
    In the second case we have that, without lost of generality, we can suppose that $\eta = (a_{1} \varphi^{1} + \dots + a_{4} \varphi^{4}) \wedge a_{5} \varphi^{5}$. Thus 
    \begin{equation*}
        \Omega|_{\mathfrak{h}_{\C}} \wedge \sigma_{2} \eta \wedge \overline{\eta} = \sigma_{3} \sigma_{2} |a_{5}|^{2}(|a_{1}|^{2} + |a_{2}|^{2} + |a_{3}|^{2} + |a_{4}|^{2}) \varphi^{12345 \overline{12345}},
    \end{equation*}
    hence $\Omega|_{\mathfrak{h}_{\C}} \wedge \sigma_{2} \eta \wedge \overline{\eta} = c \text{Vol}$, where $c > 0$. Furthermore, 
    \begin{equation*}
        \sigma_{3} \varphi^{5 \overline{5}} \wedge \big(F + \Theta \big) \wedge \sigma_{2} \eta \wedge \overline{\eta} = 0.
    \end{equation*}
    Thus, the thesis follows and $\Omega$ is an invariant $3$-K\"ahler form. 
\end{proof}
Now we are able to state the following propositions.
\begin{proposition}\label{No p-structures on type IV}
    The only compact holomorphically parallelizable manifold of type $IV$ which admits a real $(2,2)$-form satisfying one of the definitions in \ref{Definitions of p-structures} is the torus.
\end{proposition}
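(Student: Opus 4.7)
The plan is to deduce the proposition directly from Theorem~\ref{Equivalence between p-forms on holomorphically parallelizable manifolds} by a case-by-case inspection of Nakamura's list of type $IV$ Lie algebras. With $n = 4$ and $p = 2$, that theorem says that a compact holomorphically parallelizable manifold carries a $(2,2)$-structure of any of the three flavours (K\"ahler, symplectic, or pluriclosed) if and only if there is no nonzero exact simple holomorphic $(2,0)$-form on it. The task therefore reduces to exhibiting such an obstructing $(2,0)$-form for every non-abelian entry in the type $IV$ table, and confirming that the torus does admit the desired structure.

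For each of the types $IV\,2)$ through $IV\,6)$, the structure equations themselves supply the obstruction: each of these cases contains at least one equation of the shape $d\varphi^k = c\,\varphi^{ij}$ with $c \neq 0$, and I would set $\xi := \varphi^{ij}$. Concretely, in $IV\,2)$ and $IV\,4)$ take $\xi = \varphi^{23}$; in $IV\,3)$, $IV\,5)$, and $IV\,6)$ take $\xi = \varphi^{12}$. In every case $\xi$ is simple by construction, holomorphic because $\{\varphi^1,\dots,\varphi^4\}$ is a holomorphic coframe (the manifold being holomorphically parallelizable), and globally exact on the compact quotient because $\varphi^k$ is a globally well-defined left-invariant $1$-form. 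Type $IV\,7)$ is excluded since no compact holomorphically parallelizable manifold of that type exists, as already noted in the excerpt.

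For the abelian case $IV\,1)$, i.e.\ the complex $4$-torus, any flat left-invariant K\"ahler metric $\omega$ produces $\omega^{2}$ as an invariant $(2,2)$-form which is transverse and $d$-closed, hence a $2$-K\"ahler form; it is then automatically also $2$-symplectic and $2$-pluriclosed. This confirms the remaining direction and singles out the torus as the only manifold in the type $IV$ list supporting any of the three structures.

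I do not foresee a genuine obstacle: the content of the argument has been absorbed into Theorem~\ref{Equivalence between p-forms on holomorphically parallelizable manifolds}, and what is left is essentially a tabulation. The only small point to verify carefully is that the candidate $\xi$ in each case is really exact on the compact quotient (rather than only locally), but this is immediate because its primitive $\varphi^k$ is itself a globally defined left-invariant $1$-form on $\Gamma\backslash G$.
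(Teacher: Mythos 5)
Your argument is correct, and it takes a genuinely different route from the paper's. The paper handles the non-abelian cases by invoking external obstruction results: \cite[Theorems 3.7 and 4.2]{FM2023} for the nilpotent cases $IV\,2),\,3)$ and the almost abelian cases $IV\,4),\,5)$, and \cite[Proposition 3.4]{HMT2023} for case $IV\,6)$ (via the identity $d\varphi^{12\overline{2}}=\varphi^{12\overline{12}}$); only afterwards does it apply Theorem~\ref{Equivalence between p-forms on holomorphically parallelizable manifolds} to pass from the non-existence of $2$-K\"ahler forms to the non-existence of $2$-symplectic and $2$-pluriclosed ones. You instead go straight to criterion \emph{(4)} of that theorem and read off, uniformly from the structure equations, a nonzero exact simple holomorphic $(2,0)$-form in each non-abelian case ($\varphi^{23}=\mp d\varphi^{4}$ or $d\varphi^{3}$ in $IV\,2),\,4)$; $\varphi^{12}=\mp d\varphi^{3}$ or $d\varphi^{2}$ in $IV\,3),\,5),\,6)$). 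Your verifications are sound: each candidate $\xi$ is simple and nowhere vanishing by construction, globally exact because its primitive is a left-invariant $1$-form on $\Gamma\backslash G$, and $\bar\partial$-closed since $d\xi$ has no $(1,1)$ or $(0,2)$ component in any of these algebras (indeed $d\xi=0$ in every case). What your approach buys is self-containedness and uniformity --- it needs nothing beyond Theorem~\ref{Equivalence between p-forms on holomorphically parallelizable manifolds} and the structure equations --- whereas the paper's version illustrates how the various obstruction criteria of \cite{FM2023} and \cite{HMT2023} apply in concrete cases. The treatment of the torus and the exclusion of $IV\,7)$ match the paper's implicit handling.
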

\begin{proof}
    The complex structure of a holomorphically parallelizable nilmanifolds is nilpotent. Thus, from  \cite[Theorem 3.7]{FM2023} we can conclude that holomorphically parallelizable nilmanifolds of types $2)$ or $3)$ do not admit any $2$-K\"ahler form.

    Furthermore, $4)$ and  $5)$ are almost abelian holomorphically parallelizable solvmanifolds. Hence, from \cite[Theorem 4.2]{FM2023}, we can conclude that they do not admit any $2$-K\"ahler form. 
    
    In order to prove that a holomorphically parallelizable solvmanifold of type $6)$ does not admit any 2-K\"ahler form, we use \cite[Proposition 3.4]{HMT2023}. Indeed
    \begin{equation*}
        d \varphi^{12 \overline{2}} = \varphi^{12 \overline{12}},
    \end{equation*}
    hence, there are no $2$-K\"ahler forms.
    
    Thus, we have proved that they do not admit any $2$-K\"ahler form. The thesis of the proposition follows from Theorem \ref{Equivalence between p-forms on holomorphically parallelizable manifolds}.
\end{proof}
\begin{proposition}\label{No p-structures on Type V}
    There are no compact holomorphically parallelizable manifolds of type $V$, which are not of type $V \, 1)$ or $V \, 3)$, that admit a real $(p,p)$-form, for $1 < p < 4$, satisfying one of the definitions in \eqref{Definitions of p-structures}.
\end{proposition}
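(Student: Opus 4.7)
The plan is to reduce the statement to Theorem \ref{Equivalence between p-forms on holomorphically parallelizable manifolds}, whose characterization (4) asserts that a compact holomorphically parallelizable manifold $M$ admits a $p$-K\"ahler (equivalently, $p$-symplectic or $p$-pluriclosed) structure if and only if there is no nonzero exact simple holomorphic $(n-p,0)$-form on $M$. With $n=5$ and $p \in \{2,3\}$, I must exhibit, for each of the types $V\,k)$ with $k \in \{2,4,5,6,7,8,9,10,11,12,13,14,16,17,19,20\}$, an exact, simple, nonzero, holomorphic $(5-p,0)$-form for both values $p=2$ and $p=3$. By H.\,C. Wang's theorem, any such form constructed from the left-invariant coframe $\{\varphi^{1},\ldots,\varphi^{5}\}$ descends to $M$, so I may work entirely at the Lie-algebra level.

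For the case $p=3$ (requiring a simple $(2,0)$-form), a direct inspection of Nakamura's structure equations shows that in every one of the listed types some equation has the form $d\varphi^{i} = c\,\varphi^{jk}$ with $c \neq 0$: for example $d\varphi^{5}=-\varphi^{34}$ in $V\,2)$, $d\varphi^{4}=-\varphi^{12}$ in $V\,4), V\,6), V\,11)$, $d\varphi^{3}=\varphi^{13}$ in $V\,12), V\,14), V\,16)$, $d\varphi^{2}=\varphi^{12}$ in $V\,17), V\,19), V\,20)$, and similarly for all others. The corresponding $\varphi^{jk}$ is then manifestly an exact, simple, nonzero holomorphic $(2,0)$-form. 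Notice that this argument fails precisely for $V\,3)$ because there $d\varphi^{5}=-\varphi^{13}-\varphi^{24}$ is not decomposable, which is why that type must be excluded.

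For the case $p=2$ (requiring a simple $(3,0)$-form), I produce for each type a closed holomorphic $1$-form $\varphi^{k}$ (with $d\varphi^{k}=0$) and a second index $i$ with $d\varphi^{i}= c\,\varphi^{jl}$ such that $k \notin \{i,j,l\}$: then
\begin{equation*}
  d(\varphi^{k}\wedge \varphi^{i}) \;=\; -\,\varphi^{k}\wedge d\varphi^{i} \;=\; -c\,\varphi^{kjl}
\end{equation*}
is a nonzero, exact, simple holomorphic $(3,0)$-form. The existence of such a pair $(k,i)$ is immediate for the nilpotent types $V\,2), V\,4), V\,5), V\,6), V\,8), V\,9), V\,10)$, since there are at least three closed generators and at least one non-closed generator with a simple differential. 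For the solvable types one must sometimes combine two non-closed generators; for instance in $V\,11)$ one computes $d(\varphi^{4}\wedge\varphi^{5})=2\varphi^{145}$, and in $V\,12),V\,13),V\,14),V\,16)$ the form $\varphi^{2}\wedge\varphi^{3}$ (or $\varphi^{1}\wedge\varphi^{3}$) works after a short calculation.

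The main technical point, and the real obstacle, is the solvable types involving complex parameters, especially $V\,17)$ and $V\,20)$, where the naive candidate may vanish due to cancellation (e.g.\ $d(\varphi^{2}\wedge\varphi^{3})=(1+\gamma)\varphi^{123}$ in $V\,17)$ vanishes when $\gamma=-1$). The strategy here is to test several candidates in parallel, namely $d(\varphi^{2}\wedge\varphi^{3})$, $d(\varphi^{2}\wedge\varphi^{4})$, and $d(\varphi^{3}\wedge\varphi^{4})$, and to verify using the Nakamura admissibility constraints $\gamma\beta(1+\gamma+\beta)\neq 0$ (respectively $\eta(2+\eta)\neq 0$) that the three coefficients $1+\gamma$, $1+\beta$, $\gamma+\beta$ cannot vanish simultaneously, so at least one choice yields a nonzero exact simple $(3,0)$-form. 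A parallel check settles $V\,19)$ and $V\,20)$. Once all cases are dispatched, Theorem \ref{Equivalence between p-forms on holomorphically parallelizable manifolds} completes the proof.
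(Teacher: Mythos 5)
Your proof is correct, but it takes a genuinely different route from the paper. The paper first rules out $p$-K\"ahler forms case by case by citing external obstruction results (the theorems for nilpotent and almost-abelian holomorphically parallelizable manifolds in \cite{FM2023}, \cite{ST2022}, and the integration obstruction \cite[Proposition 3.4]{HMT2023} applied to explicit forms such as $d\varphi^{23\overline{4}}=-\varphi^{23\overline{23}}$), and only then invokes Theorem \ref{Equivalence between p-forms on holomorphically parallelizable manifolds} to transfer the conclusion to $p$-symplectic and $p$-pluriclosed structures. You instead go straight to condition \emph{(4)} of that theorem and violate it directly by exhibiting, for each admissible type and each $p\in\{2,3\}$, a nonzero exact simple holomorphic $(5-p,0)$-form built from the invariant coframe. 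This is more self-contained and more uniform: it needs no external obstruction lemmas, it makes transparent why type $V\,3)$ must be excluded for $p=3$ (the differential $-\varphi^{13}-\varphi^{24}$ is the unique non-closed datum and is not decomposable), and the computations I spot-checked (e.g.\ $d(\varphi^{2}\wedge\varphi^{4})=2\varphi^{124}$ in $V\,19)$, $d(\varphi^{2}\wedge\varphi^{3})=2\varphi^{123}$ in $V\,20)$, and the coefficients $1+\gamma$, $1+\beta$, $\gamma+\beta$ in $V\,17)$) are right; note that in $V\,17)$ the non-simultaneous vanishing is pure arithmetic ($1+\gamma=1+\beta=0$ forces $\gamma+\beta=-2$) and does not actually require Nakamura's admissibility constraints. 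What the paper's approach buys is that the cited obstructions do not presuppose the parallelizable structure and so generalize beyond this setting, whereas yours leans entirely on the Wang/Alessandrini--Bassanelli characterization. To turn your sketch into a complete proof you should tabulate the chosen form for every one of the sixteen types rather than saying ``a direct inspection shows'', but no case presents a genuine difficulty.
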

\begin{proof}
    Arguing as in the precedent proposition, from \cite[Theorem 3.8]{FM2023} we can conclude that there are no holomorphically parallelizable nilmanifolds of types $2)$, $3)$, $4)$, $5)$, $6)$, $8)$, $9)$ and $10)$ that admit a $2$-K\"ahler form.

    Furthermore, \cite[Theorem 2.3]{ST2022} allows us to conclude that there are no holomorphically parallelizable nilmanifolds of types $8)$, $9)$ and $10)$ that admit a $3$-K\"ahler form. In order to prove that there are no holomorphically parallelizable nilmanifolds of type $2)$, $3)$, $4)$, $5)$ and $6)$ that admit a $3$-K\"ahler form we use \cite[Proposition 3.4]{HMT2023}. For instance let us consider the case $5)$: 
    \begin{equation*}
        d \varphi^{23 \overline{4}} = - \varphi^{23 \overline{23}},
    \end{equation*}
    hence, it does not admit any $3$-K\"ahler form. Cases $2)$, $3)$, $4)$ and $6)$ follow in a similar way. 

    The last part of the proof concerns the study of compact holomorphically parallelizable solvmanifolds. 

    Compact holomorphically parallelizable solvmanifolds of types $7)$, $11)$, $13)$, $14)$, $17)$, $19)$ and  $20)$ are almost abelian. Hence, from \cite[Theorem 4.2]{FM2023}, we can conclude that they do not admit any $3$-K\"ahler forms. We can prove that there are no solvmanifolds of the aforementioned types that admit a $2$-K\"ahler form by using \cite[Proposition 3.4]{HMT2023}. For instance let us consider $14)$, then 
    \begin{equation*}
        d \varphi^{123 \overline{23}} = - \varphi^{123 \overline{123}}.
    \end{equation*}
    The other cases follow in a similar way.
    In order to prove that compact holomorphically parallelizable solvmanifolds of types $12)$ and $16)$ do not admit any $2$ or $3$-K\"ahler form we use \cite[Proposition 3.4]{HMT2023} as it was showed before. 

    Finally, the thesis follows from Theorem \ref{Equivalence between p-forms on holomorphically parallelizable manifolds}.
\end{proof}
We recall that a manifold is said to satisfy the $\partial \overline{\partial}$-Lemma if 
\begin{equation*}
    \text{ker} \, \partial \cap \text{ker} \, \overline{\partial} \cap \text{im} \, d = \text{im} \, \partial \overline{\partial},
\end{equation*}
where the operators are considered on the correct space of $(p,q)$-forms. \newline
The following result is well known, for the sake of completeness we give a proof by using Theorem \ref{Equivalence between p-forms on holomorphically parallelizable manifolds}.
\begin{theorem}\label{Compact holomorphically parallelizable manifolds and del-del bar lemma}
    Let $M$ be a compact holomorphically parallelizable manifold. If $M$ satisfies the $\partial \overline{\partial}$-Lemma, then it is a K\"ahler manifold.  
\end{theorem}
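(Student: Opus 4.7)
The plan is to apply Theorem \ref{Equivalence between p-forms on holomorphically parallelizable manifolds} with $p=1$, exploiting the equivalence between (1) (existence of a $1$-K\"ahler structure, that is, a K\"ahler metric) and (4) (nonexistence of nonzero exact, simple, holomorphic $(n-1,0)$-forms). The task thus reduces to verifying condition (4) under the hypothesis that $M$ satisfies the $\partial\overline{\partial}$-Lemma.

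To this end, I will argue by contradiction: suppose $\xi$ is a nonzero $(n-1,0)$-form on $M$ which is simple, holomorphic, and exact. Holomorphicity immediately gives $\overline{\partial}\xi = 0$, while exactness gives $d\xi = 0$; combining these two identities yields $\partial \xi = 0$ as well. Hence $\xi \in \text{ker}\,\partial \cap \text{ker}\,\overline{\partial} \cap \text{im}\,d$, and the $\partial\overline{\partial}$-Lemma provides a form $\eta$ with $\xi = \partial\overline{\partial}\eta$.

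The decisive step is then a bidegree observation. Since $\partial\overline{\partial}$ raises bidegree by $(1,1)$, an equality $\xi = \partial\overline{\partial}\eta$ with $\xi$ of pure bidegree $(n-1,0)$ would force $\eta$ to have bidegree $(n-2,-1)$, a slot in which no form exists. Consequently $\xi = 0$, contradicting our assumption. Thus condition (4) of Theorem \ref{Equivalence between p-forms on holomorphically parallelizable manifolds} is satisfied for $p=1$, so by the implication (4) $\Rightarrow$ (1) the manifold $M$ admits a $1$-K\"ahler structure, i.e., a K\"ahler metric.

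I do not foresee any genuine obstacle: the only subtlety is recognizing that the hypotheses on $\xi$ (holomorphic and $d$-exact) are strong enough to place $\xi$ simultaneously in $\text{ker}\,\partial$, $\text{ker}\,\overline{\partial}$ and $\text{im}\,d$, after which the bidegree obstruction is immediate. Note also that the simplicity of $\xi$ is not actually used, so the argument secretly proves the stronger statement that on a compact holomorphically parallelizable manifold satisfying the $\partial\overline{\partial}$-Lemma there are no nonzero exact holomorphic $(n-1,0)$-forms whatsoever.
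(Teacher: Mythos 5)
Your proof is correct and follows essentially the same route as the paper's: both verify condition (4) of Theorem \ref{Equivalence between p-forms on holomorphically parallelizable manifolds} by noting that an exact holomorphic $(p,0)$-form lies in $\ker\partial\cap\ker\overline{\partial}\cap\operatorname{im}d$, so the $\partial\overline{\partial}$-Lemma would make it $\partial\overline{\partial}$-exact, which is impossible for bidegree reasons. The only cosmetic difference is that the paper runs the argument for all $p$ at once while you specialize to $p=1$, which is all that is needed; your observation that simplicity of $\xi$ plays no role applies equally to the paper's proof.
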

\begin{proof}
    Let us suppose that the complex dimension of $M$ is $n$.
    
    Firstly, we show that if $M$ satisfies the $\partial \overline{\partial}$-Lemma, then, $\forall p = 1, \dots, n$, there are no $(p,0)$-forms $\xi$, $\xi \neq 0$ such that $\xi$ is $d$-exact, simple and holomorphic. Indeed if such $(p,0)$-form exists, then 
    \begin{equation*}
        \xi = d \alpha = \partial \alpha^{p-1,0}, \quad \overline{\partial} \xi = 0, \quad \partial \xi = 0,
    \end{equation*}
    where $\alpha$ is a $(p-1)$-form and $\alpha^{p-1,0}$ is the $(p-1,0)$-component of $\alpha$. Hence, from the $\partial \overline{\partial}$-Lemma, it is $\partial \overline{\partial}$-exact but this is absurd. 

    Thus, from Theorem \ref{Equivalence between p-forms on holomorphically parallelizable manifolds}, we can conclude that $M$ admits a $p$-K\"ahler form for every $p$, hence it is K\"ahler. 
\end{proof}

\section{$(n-1)$-symplectic forms on nilmanifolds}\label{Section 6}
In this Section we provide families of $(n-1)$-symplectic nilmanifolds for $n=3,4,5$ endowed with an invariant complex structure. As previously reminded in the Introduction, we show that an invariant $(n-1)$-symplectic structure can coexist with an Astheno-K\"ahler metric. We recall that, in \cite[Proposition 5.1]{COUV2016}, Ceballos, Otal, Ugarte and Villacampa showed that a nilmanifold equipped with an invariant complex structure admits a $(n-1)$-symplectic structure if and only if it admits an invariant $(n-1)$-symplectic structure. Hence, we can focus on invariant $(n-1)$-symplectic structures.

Furthermore, we provide an example of a compact complex, non-K\"ahler manifold which has a non-vanishing first Bott-Chern class, but a vanishing first Chern class.
\subsection{$2$-symplectic form on $6$-dimensional nilmanifolds}
In \cite{FPS2004}, A. Fino, M. Parton and S. Salamon proved that, for a $6$-dimensional nilmanifold $(M \doteq \Gamma \backslash G,J)$ with an invariant complex structure, the property of being SKT is satisfied by all Hermitian metrics or by none. Furthermore, they proved that it is satisfied if and only if $J$ admits a basis of $(1,0)$-forms $\{\alpha^{1},\alpha^{2},\alpha^{3}\}$ such that
\begin{equation}\label{Structure equations for 6-dimensional Nilmanifolds with SKT}
    \begin{cases}
        d \alpha^{1} = 0, \\
        d \alpha^{2} = 0, \\
        d \alpha^{3} = A \alpha^{\overline{1}2} + B \alpha^{\overline{2}2} + C \alpha^{1 \overline{1}} + D \alpha^{1 \overline{2}} + E \alpha^{12},
    \end{cases}
\end{equation}
where $A,B,C,D,E \in \C$ satisfy
\begin{equation*}
    |A|^{2} + |D|^{2} + |E|^{2} + 2 \text{Re}(\overline{B}C) = 0.
\end{equation*}
Here, we directly construct $2$-symplectic structures on such manifolds. 

\begin{remark}
    Let $(M,J)$ be a $6$-dimensional complex nilmanifold equipped with an invariant complex structure which admits a basis of $(1,0)$-forms $\{\alpha_{1}, \alpha_{2}, \alpha_{3}\}$ such that 
    \begin{equation}\label{Structure equations for 6-dimensional Nilmanifolds}
        \begin{cases}
            d \alpha^{1} = 0, \\
            d \alpha^{2} = 0, \\
            d \alpha^{3} = A \alpha^{\overline{1}2} + B \alpha^{\overline{2}2} + C \alpha^{1 \overline{1}} + D \alpha^{1 \overline{2}} + E \alpha^{12},
        \end{cases}
    \end{equation}
    where $A,B,C,D,E \in \C$. 
    An invariant, transverse, real $4$-form on $(M,J)$ is given by 
    \begin{equation}\label{Invariant, transverse, real, 4-form}
        \Psi  = \lambda^{3,1} + \eta + \overline{\lambda^{3,1}},
    \end{equation}
    where $\eta$ is a real $(2,2)$-form which must be transverse and 
    \begin{equation}\label{(3,1)-forms of the 2-symplectic form on FPS}
        \lambda^{3,1} \doteq L \alpha^{123 \overline{1}} + M \alpha^{123 \overline{2}} + N \alpha^{123 \overline{3}}, \quad L,M,N \in \C.
    \end{equation}
    From \cite{M1982}, we know that $\eta$ must be the power of a Hermitian metric.
    In order to completely describe the possible invariant $2$-symplectic forms on $(M,J)$, let us consider $\omega$ a generic left-invariant Hermitian metric, i.e., 
\begin{equation}\label{Gauduchon metric}
    \omega \doteq \frac{i}{2} (r^{2} \alpha^{1 \overline{1}}+ s^{2} \alpha^{2 \overline{2}} + t^{2}\alpha^{3 \overline{3}}) + \frac{u}{2}\alpha^{1 \overline{2}} - \frac{\overline{u}}{2}\alpha^{2 \overline{1}} + \frac{v}{2}\alpha^{1 \overline{3}} - \frac{\overline{v}}{2}\alpha^{3 \overline{1}} + \frac{w}{2}\alpha^{2 \overline{3}} - \frac{\overline{w}}{2}\alpha^{3 \overline{2}}, 
\end{equation}
where $r,s,t \in \R$ and $u,v,w \in \C$ satisfy the following conditions:
\begin{equation*}
    r^{2} > 0, \quad r^{2} s^{2} - |u|^{2} > 0, \quad r^{2}s^{2}t^{2} - 2 \text{Re}(i u \overline{v} w) > r^{2}|w|^{2} + s^{2}|v|^{2} + t^{2} |u|^{2}.
\end{equation*}
Furthermore, from a straightforward computation
\begin{equation*}
    \begin{split}
        \omega^{2} = & -\frac{1}{2} (r^{2} s^{2} \alpha^{1 \overline{1}2 \overline{2}} + r^{2} t^{2} \alpha^{1 \overline{1}3 \overline{3}} + s^{2} t^{2} \alpha^{2 \overline{2}3 \overline{3}}) + \frac{|u|^{2}}{2} \alpha^{1 \overline{1} 2 \overline{2}} + \frac{|v|^{2}}{2} \alpha^{1 \overline{1} 3 \overline{3}}  \\
        & + \frac{|w|^{2}}{2} \alpha^{2 \overline{2} 3 \overline{3}} + \frac{u \overline{v}}{2} \alpha^{1 \overline{1}3 \overline{2}} + \frac{\overline{u} v}{2} \alpha^{1 \overline{1}2 \overline{3}} - \frac{u w}{2} \alpha^{2 \overline{2}1 \overline{3}} - \frac{\overline{uw}}{2} \alpha^{2 \overline{2}3 \overline{1}} \\
        & + \frac{v \overline{w}}{2} \alpha^{3 \overline{3}1 \overline{2}} + \frac{\overline{v} w}{2} \alpha^{3 \overline{3}2 \overline{1}} + \frac{i}{2} (r^{2}w \alpha^{1 \overline{1}2 \overline{3}} - r^{2} \overline{w} \alpha^{1 \overline{1} 3 \overline{2}} \\
        & + s^{2}v \alpha^{2 \overline{2} 1 \overline{3}} -s^{2}\overline{v}\alpha^{2 \overline{2} 3 \overline{1}})  + \frac{i}{2}(t^{2} u \alpha^{3 \overline{3} 1 \overline{2}} - t^{2} \overline{u} \alpha^{3 \overline{3} 2 \overline{1}}). 
    \end{split}
\end{equation*}
\end{remark}
 
\begin{theorem}\label{existence of 2-symplectic}
    Let $(M = \Gamma \backslash G,J)$ be a $6$-dimensional nilmanifold equipped with an invariant complex structure $J$. Suppose that $J$ admits a basis $\{\alpha^{1}, \alpha^{2}, \alpha^{3}\}$ of $(1,0)$-forms satisfying \eqref{Structure equations for 6-dimensional Nilmanifolds}. Let $\Psi$ be an invariant, transverse, real, $4$-form, i.e., $\Psi \doteq \lambda^{3,1} + \omega^{2} + \overline{\lambda^{1,3}}$, where $\lambda^{3,1}$ and $\omega$ are, respectively, defined as in \eqref{(3,1)-forms of the 2-symplectic form on FPS} and \eqref{Gauduchon metric}. 
    
    Then, $\Psi$ is a $2$-symplectic form if and only if 
    \begin{equation}\label{Condition for existence of 2-symplectic forms on 6-dimensional Nilmanifolds}
        -N \overline{E} + \frac{1}{2} \big( -r^{2} t^{2} \overline{B} + s^{2} t^{2} \overline{C} + |v|^{2} \overline{B} - |w|^{2}\overline{C} + i t^{2} u \overline{D} + it^{2}\overline{uA} + v \overline{wD} - \overline{v}w \overline{A} \big)= 0.
    \end{equation}
\end{theorem}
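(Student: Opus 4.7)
The plan is to observe that $\Psi$ is automatically transverse: its $(p,p)$-component equals $\omega^{2}$, which is transverse as the square of the fundamental form of the Hermitian metric $\omega$. Consequently $\Psi$ is $2$-symplectic if and only if $d\Psi = 0$, and the whole content of the theorem is that this closedness condition is equivalent to \eqref{Condition for existence of 2-symplectic forms on 6-dimensional Nilmanifolds}.

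Decomposing $d\Psi = 0$ by bidegree, and using that $M$ has complex dimension $3$ so there are no non-zero $(4,q)$-forms, the closedness reduces to the single equation $\overline{\partial}\lambda^{3,1} + \partial(\omega^{2}) = 0$ in bidegree $(3,2)$ (its bidegree-$(2,3)$ conjugate is then automatic). Since $(3,2)$-forms on a $3$-dimensional complex manifold are spanned by $\alpha^{123\bar{1}\bar{2}}$, $\alpha^{123\bar{1}\bar{3}}$, and $\alpha^{123\bar{2}\bar{3}}$, I would check the vanishing of each of the three coefficients separately.

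For $\overline{\partial}\lambda^{3,1}$, the fact that $\alpha^{1},\alpha^{2}$ are closed and that every summand of $d\alpha^{3}$ contains either $\alpha^{1}$ or $\alpha^{2}$ forces $d(\alpha^{123\bar{1}}) = d(\alpha^{123\bar{2}}) = 0$; this is why $L$ and $M$ do not appear in the final condition, a useful sanity check. The only non-trivial contribution comes from $d(\alpha^{123\bar{3}}) = -\alpha^{123}\wedge d\alpha^{\bar{3}}$, where the only summand of $d\alpha^{\bar{3}}$ not killed by the wedge with $\alpha^{123}$ is $\bar{E}\alpha^{\bar{1}\bar{2}}$; this yields $\overline{\partial}\lambda^{3,1} = -N\bar{E}\,\alpha^{123\bar{1}\bar{2}}$.

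For $\partial(\omega^{2})$, I would expand term by term using the explicit formula for $\omega^{2}$ recorded in the excerpt. Because $d\alpha^{3}$ has only $(2,0)$ and $(1,1)$ parts (and $d\alpha^{\bar{3}}$ only $(1,1)$ and $(0,2)$ parts), the coefficients of $\alpha^{123\bar{1}\bar{3}}$ and $\alpha^{123\bar{2}\bar{3}}$ in $\partial(\omega^{2})$ vanish for purely structural reasons, and every wedge-repetition argument reduces to inspecting which summands of $d\alpha^{3}$ or $d\alpha^{\bar{3}}$ survive wedging with a given monomial of $\omega^{2}$. A careful bookkeeping produces exactly the coefficient $\tfrac{1}{2}\bigl(-r^{2}t^{2}\bar{B} + s^{2}t^{2}\bar{C} + |v|^{2}\bar{B} - |w|^{2}\bar{C} + it^{2}u\bar{D} + it^{2}\overline{uA} + v\overline{wD} - \bar{v}w\overline{A}\bigr)$ of $\alpha^{123\bar{1}\bar{2}}$ in $\partial(\omega^{2})$; adding this to $-N\bar{E}$ and equating the sum to zero gives exactly \eqref{Condition for existence of 2-symplectic forms on 6-dimensional Nilmanifolds}. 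The main obstacle is precisely this linear-algebra bookkeeping: organizing the expansion of $\partial(\omega^{2}) = 2\omega\wedge\partial\omega$ by which factor supplies the (unique) non-closed derivative $d\alpha^{3}$, and tracking the signs of the reorderings to $\alpha^{123\bar{1}\bar{2}}$, keeps the computation finite but demands care.
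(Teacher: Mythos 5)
Your proposal is correct and follows essentially the same route as the paper: a direct computation of $d\Psi$, reduced by bidegree to the single coefficient of $\alpha^{123\overline{1}\overline{2}}$ in $\overline{\partial}\lambda^{3,1}+\partial(\omega^{2})$ (the $(2,3)$-part being its conjugate), with the same structural observations that $d(\alpha^{123\overline{1}})=d(\alpha^{123\overline{2}})=0$ and that only the $\overline{E}\alpha^{\overline{1}\overline{2}}$ term of $d\alpha^{\overline{3}}$ survives in $\overline{\partial}\lambda^{3,1}$. The remaining bookkeeping for $\partial(\omega^{2})$ that you defer is exactly the term-by-term expansion the paper carries out, and your stated outcome agrees with it.
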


\begin{proof}
    From \eqref{Structure equations for 6-dimensional Nilmanifolds}, we get 
    \begin{equation*}
        d \alpha^{i \overline{i} j \overline{3}} = d \alpha^{i \overline{i} 3 \overline{j}} = d \alpha^{1 \overline{1} 2 \overline{2}}=0,
    \end{equation*}
    for $i=1,2$, $j=1,2$, and 
    \begin{equation*}
        \begin{split}
            d\alpha^{3 \overline{3}} & = -A \alpha^{2 \overline{13}} - B \alpha^{2 \overline{23}} + C \alpha^{1 \overline{13}} + D \alpha^{1\overline{23}} + E \alpha^{12 \overline{3}} \\
            & \quad + \overline{A} \alpha^{13 \overline{2}} + \overline{B} \alpha^{23 \overline{3}} - \overline{C} \alpha^{13 \overline{1}} - \overline{D} \alpha^{23 \overline{1}} - \overline{E} \alpha^{3 \overline{12}},
        \end{split}
    \end{equation*}
    then
    \begin{equation*}
        \begin{split}
            & d \alpha^{1 \overline{1} 3 \overline{3}} =  B \alpha^{12 \overline{1} \overline{23}} + \overline{B} \alpha^{123  \overline{1} \overline{3}}, \quad  d \alpha^{2 \overline{2}3 \overline{3}} = - C \alpha^{12 \overline{123}} - \overline{C} \alpha^{123 \overline{12}}, \\
            & d \alpha^{3 \overline{3} 1 \overline{2}} = -A \alpha^{12 \overline{123}} + \overline{D} \alpha^{123 \overline{12}}, \quad d\alpha^{3 \overline{3}2 \overline{1}} = - \overline{A}\alpha^{123 \overline{12}} + D \alpha^{12 \overline{123}}.
        \end{split}
    \end{equation*}
    Thus  
    \begin{equation}\label{Lambda (3,1)}
        \begin{split}
            d \omega^{2} & = \frac{1}{2} \big(- r^{2}t^{2} \overline{B} + s^{2}t^{2} \overline{C} + |v|^{2}\overline{B} - |w|^{2} \overline{C} + it^{2}u \overline{D} + i t^{2}\overline{uA} + v \overline{wD} - \overline{v}w \overline{A} \big) \alpha^{123 \overline{12}} \\
            & + \frac{1}{2} \big( - r^{2}t^{2} B + s^{2}t^{2} C + |v|^{2}B - |w|^{2} C - it^{2}\overline{u} D - i t^{2}u A + \overline{v} w D - v\overline{w} A \big) \alpha^{12 \overline{123}}.
        \end{split}
    \end{equation}
    Moreover,
    \begin{equation*}
        d \lambda^{3,1} = -N \alpha^{123} \wedge (\overline{A} \alpha^{1 \overline{2}} + \overline{B} \alpha^{2 \overline{2}} + \overline{C} \alpha^{\overline{1}1} + \overline{E} \alpha^{\overline{12}} + \overline{D} \alpha^{\overline{1}2}) = -N \overline{E} \alpha^{123 \overline{12}},
    \end{equation*}
    and 
    \begin{equation*}
        d \overline{\lambda^{3,1}} = - \overline{N} E \alpha^{12 \overline{123}}.
    \end{equation*}
    Thus, $d \Psi = 0$ if and only if 
    \begin{equation*}
        -N \overline{E} + \frac{1}{2} \big( -r^{2} t^{2} \overline{B} + s^{2} t^{2} \overline{C} + |v|^{2} \overline{B} - |w|^{2}\overline{C} + i t^{2} u \overline{D} + i t^{2} \overline{uA} + v \overline{wD} - \overline{v}w \overline{A} \big)= 0
    \end{equation*}
\end{proof}

As a consequence we get the following 
\begin{theorem}\label{2-symplectic-skt}
    Let $(M = \Gamma \backslash G,J)$ be a $6$-dimensional nilmanifold equipped with an invariant complex structure. Let $\omega$ be the fundamental form of the diagonal Hermitian metric and $\Psi$ be an invariant, real, transverse $4$-form defined by 
    \begin{equation*}
        \Psi \doteq \lambda^{3,1} + \omega^{2} + \overline{\lambda^{3,1}},
    \end{equation*}
    where $\lambda^{3,1}$ is defined as in \eqref{(3,1)-forms of the 2-symplectic form on FPS}.
    The diagonal metric is SKT and simultaneously $\Psi$ is a $2$-symplectic invariant form if and only if 
    \begin{equation}\label{2-symplectic and SKT on 6-dimensional nilmanifolds}
        \begin{cases}
            |A|^{2} + |D|^{2} + |E|^{2} + 2 \text{Re}(\overline{B}C) = 0, \\
            \frac{1}{2} (\overline{C} - \overline{B}) - N \overline{E} = 0.
        \end{cases}
    \end{equation}
\end{theorem}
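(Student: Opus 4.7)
The plan is to derive the result as a direct specialization of Theorem \ref{existence of 2-symplectic} combined with the Fino--Parton--Salamon characterization of $6$-dimensional SKT nilmanifolds recalled in \eqref{Structure equations for 6-dimensional Nilmanifolds with SKT}.

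First, I would record that for a $6$-dimensional nilmanifold with invariant complex structure, the existence of an SKT metric forces the complex structure to admit a coframe $\{\alpha^1,\alpha^2,\alpha^3\}$ of $(1,0)$-forms whose structure equations are of the form \eqref{Structure equations for 6-dimensional Nilmanifolds}, and the SKT condition for the diagonal metric $\omega=\tfrac{i}{2}(\alpha^{1\overline 1}+\alpha^{2\overline 2}+\alpha^{3\overline 3})$ reads precisely
\[
|A|^{2} + |D|^{2} + |E|^{2} + 2\,\mathrm{Re}(\overline{B}C)=0.
\]
This is the first equation in \eqref{2-symplectic and SKT on 6-dimensional nilmanifolds}, so one direction of the equivalence in the first coordinate is already free.

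Next, I would specialize Theorem \ref{existence of 2-symplectic} to the diagonal metric, i.e. to the parameters $r=s=t=1$ and $u=v=w=0$ in \eqref{Gauduchon metric}. Plugging these values into condition \eqref{Condition for existence of 2-symplectic forms on 6-dimensional Nilmanifolds} makes all terms containing $u$, $v$, $w$ vanish, and the remaining terms collapse to
\[
-N\overline{E}+\tfrac{1}{2}(\overline{C}-\overline{B})=0,
\]
which is exactly the second equation of \eqref{2-symplectic and SKT on 6-dimensional nilmanifolds}. By Theorem \ref{existence of 2-symplectic}, this is equivalent to $\Psi$ being closed; transversality of $\Psi$ is built into the hypothesis (the $(2,2)$-component is $\omega^2$, which is transverse because $\omega$ is a genuine Hermitian form).

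The two conditions in \eqref{2-symplectic and SKT on 6-dimensional nilmanifolds} are independent: the first one depends only on the structure constants and controls whether the diagonal metric is SKT, while the second one couples the structure constants to the free parameter $N \in \C$ in $\lambda^{3,1}$ and controls whether $\Psi$ is $d$-closed. Both characterizations are biconditionals, so combining them gives the ``if and only if'' statement of the theorem. No genuine obstacle is expected; the only care needed is the bookkeeping of the specialization of \eqref{Condition for existence of 2-symplectic forms on 6-dimensional Nilmanifolds}, which is an entirely routine substitution.
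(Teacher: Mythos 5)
Your proposal is correct and follows essentially the same route as the paper: invoke the Fino--Parton--Salamon characterization for the SKT condition on the diagonal metric, then specialize Theorem \ref{existence of 2-symplectic} to $r=s=t=1$, $u=v=w=0$ so that \eqref{Condition for existence of 2-symplectic forms on 6-dimensional Nilmanifolds} collapses to $\frac{1}{2}(\overline{C}-\overline{B})-N\overline{E}=0$. The paper's proof does exactly this substitution and nothing more.
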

\begin{proof}  
    As described above, from \cite{FPS2004}, the diagonal Hermitian metric is SKT if and only if $J$ has a basis of $(1,0)$-forms $\{\alpha^{1}, \alpha^{2}, \alpha^{3}\}$ which satisfies \eqref{Structure equations for 6-dimensional Nilmanifolds} and $|A|^{2} + |D|^{2} + |E|^{2} + 2 \text{Re}(\overline{B}C) = 0$.
    
    Since $\omega = \frac{i}{2} \sum_{j=1}^{3} \alpha^{j \overline{j}}$, then $r,s,t = 1$, while $u,v,w = 0$ (see \eqref{Gauduchon metric}). Furthermore,
    \begin{equation}\label{Diagonal SKT metric on FPS}
        \partial \overline{\partial} \omega = \frac{i}{2} (|A|^{2} + |D|^{2} + |E|^{2} + 2\text{Re}(B \overline{C})) \alpha^{1 \overline{1} 2 \overline{2}} = 0,
    \end{equation}
    and 
    \begin{equation*}
        \eta \doteq \omega^{2} = -\frac{1}{2}(\alpha^{1 \overline{1} 2 \overline{2}} + \alpha^{1 \overline{1} 3 \overline{3}} + \alpha^{2 \overline{2} 3 \overline{3}}).
    \end{equation*}
    Thus, from \eqref{Condition for existence of 2-symplectic forms on 6-dimensional Nilmanifolds}, $\Psi \doteq \lambda^{3,1} + \omega^{2} + \overline{\lambda^{3,1}}$ is a $2$-symplectic form if and only if 
    \begin{equation*}
            \frac{1}{2} (\overline{C} - \overline{B}) - N \overline{E} = 0.
    \end{equation*}
    Hence, the thesis follows.
\end{proof}
\begin{remark}
    If $N=0$ or $E=0$ or $B=0$ or $C=0$, the system \eqref{2-symplectic and SKT on 6-dimensional nilmanifolds} implies that $A=B=C=D=E=0$, thus $M$ is a torus. If $N,E,B,C \neq 0$, we can easily observe that there are non trivial solutions of the system. For instance, let us suppose that $A=D=0$ and $N,E,B,C \neq 0$, then, 
    \begin{equation*}
        B \doteq x + iy, \quad C \doteq u+iv, \quad N \doteq s+it,
    \end{equation*}
    where $x,y,u,v,s,t \in \R$. Thus,
    \begin{equation*}
        \overline{B}C = xu + yv +i(xv -yu),
    \end{equation*}
    and, since $E = \frac{(C-B)}{2\overline{N}}$, the system \eqref{2-symplectic and SKT on 6-dimensional nilmanifolds} is equivalent to 
    \begin{equation*}
        \frac{1}{4(s^{2} + t^{2})} \big((u-x)^{2} + (v-y)^{2} \big) + xu + yv = 0.
    \end{equation*}
    Using the following notation $a \doteq |N|^{2} = s^{2} + t^{2}$, we get 
    \begin{equation*}
        (u-x)^{2} + (v-y)^{2} + 4axu + 4ayv = 0,
    \end{equation*}
    thus, once $u,v$ are fixed, the equation becomes 
    \begin{equation*}
        x^{2} + u^{2} - 2xu + y^{2} + v^{2} -2yv + 4axu + 4ayv = 0.
    \end{equation*}
    So 
    \begin{equation*}
        x^{2} + y^{2} + (4a-2) xu + (4a-2) yv + v^{2} + u^{2} = 0,
    \end{equation*}
    which is the equation for a circle in $\R^{2}$. In order to be sure that the circle exists we must have 
    \begin{equation*}
        (4a-2)^{2}u^{2} + (4a-2)^{2} v^{2} - 4v^{2} - 4u^{2} > 0.
    \end{equation*}
    Hence
    \begin{equation*}
        16a^{2}u^{2} - 16au^{2} + 16a^{2}v^{2} - 16a v^{2} > 0,
    \end{equation*}
    so, 
    \begin{equation*}
        16(u^{2} + v^{2})(a^{2} - a) > 0.
    \end{equation*}
    Finally, if $a>1$ then there is a circle of solutions.
\end{remark}

\subsection{$3$-symplectic form on families of $8$-dimensional nilmanifolds}
Let us consider the family of $8$-dimensional nilmanifolds studied by A. Fino and A. Tomas\-sini in \cite{FT2011}. They use this family to prove that, in general, there is no relationship between SKT and Astheno-K\"ahler metrics. Indeed, they show that, under a particular choiche of coefficients, the diagonal metric can be an Astheno-K\"ahler metric but not a SKT metric.

Let us consider the set of $(1,0)$-forms $\{ \eta^{1}, \eta^{2}, \eta^{3}, \eta^{4}\}$, such that 
\begin{equation}\label{Structure equations for 8-dimensional Nilmanifolds with SKT and Astheno}
    \begin{cases}
        d \eta^{j} = 0, \quad \forall j = 1,2,3, \\
        d \eta^{4} = a_{1} \, \eta^{1} \wedge \eta^{2}  + a_{2} \, \eta^{1} \wedge \eta^{3}  + a_{3} \, \eta^{1} \wedge \eta^{\overline{1}}  + a_{4} \, \eta^{1} \wedge \eta^{\overline{2}}  \\
        \quad \quad \quad + a_{5} \, \eta^{1} \wedge \eta^{\overline{3}}  + a_{6} \, \eta^{2} \wedge \eta^{3} + a_{7} \, \eta^{2} \wedge \eta^{\overline{1}}  + a_{8} \, \eta^{2} \wedge \eta^{\overline{2}} \\
        \quad \quad \quad + a_{9} \, \eta^{2} \wedge \eta^{\overline{3}}  + a_{10} \, \eta^{3} \wedge \eta^{\overline{1}}  + a_{11} \, \eta^{3} \wedge \eta^{\overline{2}}  + a_{12} \, \eta^{3} \wedge \eta^{\overline{3}}, 
    \end{cases}
\end{equation}
where $a_{j} \in \C$, for $j=1, \dots 12$. 

The complex forms $\{\eta^{1}, \dots, \eta^{4}\}$ span the dual of a Lie algebra $\mathfrak{g}$, which depends on the coefficients $a_{1}, \dots , a_{12}$ and it is $2$-step nilpotent. Furthermore, the almost complex structure $J$ defined by \eqref{Structure equations for 8-dimensional Nilmanifolds with SKT and Astheno} is integrable. Let us denote by $G$ the simply connected Lie group with Lie algebra $\mathfrak{g}$. Due to Malcev's Theorem (\cite{M1951}) and the nilpotency of $G$, if $a_{1},\dots, a_{12} \in \Q[i]$, then, there exists a lattice $\Gamma$ such that $M \doteq \Gamma \backslash G$ is a nilmanifold.

Let us study the existence of $3$-symplectic forms on $M$.
\begin{theorem}\label{3-symplectic forms on a family of 8-dimensional nilmanifolds}
    Let $(M \doteq \Gamma \backslash G,J,g)$ be the $8$-dimensional nilmanifold equipped with the invariant complex structure $J$ that stems from \eqref{Structure equations for 8-dimensional Nilmanifolds with SKT and Astheno} and the diagonal metric $g$, whose fundamental form is denoted by $\omega$. Let $\Psi$ be an invariant, real, transverse $6$-form, i.e., $\Psi \doteq \lambda^{4,2} + \eta + \overline{\lambda^{4,2}}$, where 
    \begin{equation*}
        \begin{cases}
            & \lambda^{4,2} \doteq L_{1} \eta^{1234 \overline{12}} + L_{2} \eta^{1234 \overline{13}} + L_{3} \eta^{1234 \overline{14}} \\
            & \quad \quad \quad + M_{1} \eta^{1234 \overline{23}} + M_{2} \eta^{1234 \overline{24}} + N \eta^{1234 \overline{34}}, \\
            & \eta \doteq \omega^{3},
        \end{cases}
    \end{equation*}
    and $L_{1},L_{2},L_{3},M_{1},M_{2},N \in \C$. Then $\Psi$ is a $3$-symplectic form if and only if 
    \begin{equation}\label{Condition for existence of 3-symplectic forms on 8-dimensional Nilmanifolds}
        \frac{3}{4}i(a_{3} + a_{8} + a_{12}) - \overline{L_{3}} a_{6} + \overline{M_{2}} a_{2} - \overline{N} a_{1} = 0. 
    \end{equation}
\end{theorem}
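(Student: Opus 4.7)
The plan is to compute the type decomposition of $d\Psi$ directly. Since $\dim_{\C} M = 4$, every $(p,q)$-form with $p>4$ or $q>4$ vanishes, so the $7$-form $d\Psi$ has only $(4,3)$ and $(3,4)$ components; because $\Psi$ is real these are mutually conjugate, hence $d\Psi=0$ is equivalent to $(d\Psi)^{(4,3)}=0$. Since the $(5,2)$-part of $d\lambda^{4,2}$ vanishes by dimension,
\begin{equation*}
(d\Psi)^{(4,3)} = \overline{\partial}\lambda^{4,2} + \partial \omega^3,
\end{equation*}
and it suffices to show that this expression vanishes exactly when \eqref{Condition for existence of 3-symplectic forms on 8-dimensional Nilmanifolds} holds.

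For $\overline{\partial}\lambda^{4,2}$, the key observation is that $\eta^1,\eta^2,\eta^3$ are $d$-closed and every monomial in $\lambda^{4,2}$ already contains $\eta^{123}$. Since $\overline{\partial}\eta^4$ is a combination of $\eta^{j\overline{k}}$ with $j\in\{1,2,3\}$, its contribution is always killed by the existing $\eta^{123}$ factor. Hence only the summands containing $\eta^{\overline 4}$, namely the $L_3,M_2,N$ terms, can contribute. Using
\begin{equation*}
\overline{\partial}\eta^{\overline 4}=\overline{a_1}\eta^{\overline{12}}+\overline{a_2}\eta^{\overline{13}}+\overline{a_6}\eta^{\overline{23}}
\end{equation*}
and elementary sign bookkeeping, each of these three summands produces the single monomial $\eta^{1234\overline{123}}$ with respective coefficients $-\overline{a_6}$, $\overline{a_2}$, $-\overline{a_1}$.

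For $\partial\omega^3$, I would expand $\omega^3=\frac{3i}{4}\sum_{j<k<l}\eta^{jkl\overline{jkl}}$. The term $\eta^{123\overline{123}}$ is $d$-closed, and in each of the three remaining summands the contribution coming from $\partial\eta^4$ vanishes because $\partial\eta^4 \in \langle \eta^{12},\eta^{13},\eta^{23}\rangle$ must be wedged with one of these same two-forms, yielding a repeated holomorphic index. The contribution from $\partial\eta^{\overline 4}$, which is the $(1,1)$-part of $d\eta^{\overline 4}$, retains only the diagonal term $\eta^{i\overline i}$ with $i$ the element of $\{1,2,3\}$ missing from $\{j,k,l\}$, producing
\begin{equation*}
\partial\omega^3=-\tfrac{3i}{4}(\overline{a_3}+\overline{a_8}+\overline{a_{12}})\,\eta^{1234\overline{123}}.
\end{equation*}

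Summing the two contributions, the coefficient of $\eta^{1234\overline{123}}$ in $(d\Psi)^{(4,3)}$ equals the complex conjugate of the left-hand side of \eqref{Condition for existence of 3-symplectic forms on 8-dimensional Nilmanifolds}, so $d\Psi=0$ is equivalent to that equation. The main bookkeeping obstacle is tracking signs when re-ordering long wedges to the canonical form $\eta^{1234\overline{123}}$; once one recognizes that $\eta^{1234\overline{123}}$ is the only $(4,3)$-monomial that can appear on either side, the verification reduces to a handful of short calculations.
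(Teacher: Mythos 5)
Your proposal is correct and follows essentially the same route as the paper: a direct computation of $d\Psi$ in the invariant coframe, reduced to the vanishing of the coefficient of the single monomial $\eta^{1234\overline{123}}$ (the paper computes both the $(4,3)$- and $(3,4)$-components, which are conjugate by reality of $\Psi$, while you compute only the $(4,3)$-part — a cosmetic difference). Your coefficients $-\overline{a_6},\overline{a_2},-\overline{a_1}$ for the $L_3,M_2,N$ terms and $-\tfrac{3i}{4}(\overline{a_3}+\overline{a_8}+\overline{a_{12}})$ for $\partial\omega^3$ agree with the paper's formulas for $d\lambda^{4,2}$ and $d\eta$.
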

\begin{proof}
    Since 
        \begin{equation*}
            \eta = \frac{3}{4} i(\eta^{123 \overline{123}} + \eta^{124 \overline{124}} + \eta^{134 \overline{134}} + \eta^{234 \overline{234}}),
        \end{equation*}
    and, from \eqref{Structure equations for 8-dimensional Nilmanifolds with SKT and Astheno}, we get 
    \begin{equation*}
        \begin{split}
            & d \eta^{124 \overline{124}} = a_{12} \eta^{123 \overline{1234}} - \overline{a_{12}} \eta^{1234 \overline{123}}, \quad d \eta^{134 \overline{134}} = a_{8} \eta^{123 \overline{1234}} - \overline{a_{8}} \eta^{1234 \overline{123}}, \\
            &  d \eta^{234 \overline{234}} = a_{3} \eta^{123 \overline{1234}} - \overline{a_{3}} \eta^{1234 \overline{123}}, \quad d \eta^{123 \overline{123}} = 0,
        \end{split}
    \end{equation*}
    then
    \begin{equation*}
        d \eta =\frac{3}{4} i (a_{3} + a_{8} + a_{12}) \eta^{123 \overline{1234}} - \frac{3}{4} i (\overline{a_{3} + a_{8} + a_{12}}) \eta^{1234 \overline{123}}.
    \end{equation*}
    Furthermore,
    \begin{equation*}
        d \lambda^{4,2} = (M_{2} \overline{a_{2}} - L_{3}\overline{a_{6}} - N \overline{a_{1}}) \eta^{1234 \overline{123}}, \quad d \overline{\lambda^{4,2}} = (\overline{M_{2}} a_{2} - \overline{L_{3}} a_{6} - \overline{N} a_{1}) \eta^{123 \overline{1234}}.
    \end{equation*}
    Thus, the thesis follows.
\end{proof}
In \cite{FT2011}, the authors proved that the diagonal metric $g$ is Astheno-K\"ahler if and only if
\begin{equation*}
    \begin{split}
        & |a_{1}|^{2} + |a_{2}|^{2} + |a_{4}|^{2} + |a_{5}|^{2} + |a_{6}|^{2} + |a_{7}|^{2} + |a_{9}|^{2} \\
        & + |a_{10}|^{2} + |a_{11}|^{2} = 2 \text{Re} \big(a_{3} \overline{a_{8}} + a_{3} \overline{a_{12}} + a_{8} \overline{a_{12}} \big).
    \end{split}
\end{equation*}
Notably, for a specific choice of coefficients, i.e., $a_{8}=0$, $|a_{4}|^{2} + |a_{11}|^{2} \neq 0$, the metric $g$ is Astheno-K\"ahler but not SKT. Moreover, if $a_{8}=0$, then $g$ is both SKT and Astheno-K\"ahler if $a_{1}=a_{4}=a_{6}=a_{7}=a_{9}=a_{11}=0$.

In this case, a $3$-symplectic form can coexist with a metric that is both SKT and Astheno-K\"ahler. As a consequence we get the following 

\begin{theorem}\label{3-symplectic-astheno-skt}
    Let $(M \doteq \Gamma \backslash G,J,g)$ be as in Theorem \ref{3-symplectic forms on a family of 8-dimensional nilmanifolds}. Suppose that $a_{8}=0$. Then the diagonal metric $g$ is both SKT, Astheno-K\"ahler and there exists a $3$-symplectic form $\Psi$, as aforementioned, if and only if 
    \begin{equation}\label{3-Symplectic plus Astheno-Kahler and SKT}
        \begin{cases}
            a_{1}=a_{4}=a_{6}=a_{7}=a_{9}=a_{11}=0, \\
            |a_{2}|^{2} + |a_{5}|^{2} + |a_{10}|^{2} = 2 \text{Re}(a_{3} \overline{a_{12}}), \\
            \frac{3}{4} i (a_{3} + a_{12}) + \overline{M_{2}} a_{2} = 0.
        \end{cases}
    \end{equation}
\end{theorem}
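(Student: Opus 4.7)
The plan is to decouple the three conditions ``$g$ SKT,'' ``$g$ Astheno--K\"ahler,'' and ``$\Psi$ is $3$-symplectic,'' handle each separately under the hypothesis $a_{8}=0$, and then glue them. First, invoking the SKT characterization for the diagonal metric $g$ on this family (obtained by direct computation of $\partial\overline{\partial}\omega$ from the structure equations \eqref{Structure equations for 8-dimensional Nilmanifolds with SKT and Astheno} and comparing coefficients of the basis $(2,2)$-forms $\eta^{i\bar\jmath k\bar\ell}$), the SKT condition under $a_{8}=0$ forces the vanishing
\[
a_{1}=a_{4}=a_{6}=a_{7}=a_{9}=a_{11}=0,
\]
as already recorded in \cite{FT2011}. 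This yields the first line of \eqref{3-Symplectic plus Astheno-Kahler and SKT}.

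Next, I would substitute these vanishings together with $a_{8}=0$ into the Astheno--K\"ahler equation recalled just above the theorem, namely
\[
|a_{1}|^{2}+|a_{2}|^{2}+|a_{4}|^{2}+|a_{5}|^{2}+|a_{6}|^{2}+|a_{7}|^{2}+|a_{9}|^{2}+|a_{10}|^{2}+|a_{11}|^{2}=2\,\mathrm{Re}\bigl(a_{3}\overline{a_{8}}+a_{3}\overline{a_{12}}+a_{8}\overline{a_{12}}\bigr).
\]
All terms on the left drop out except $|a_{2}|^{2}+|a_{5}|^{2}+|a_{10}|^{2}$, and the right-hand side collapses to $2\,\mathrm{Re}(a_{3}\overline{a_{12}})$, producing the second line of \eqref{3-Symplectic plus Astheno-Kahler and SKT}. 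For the third line, I apply Theorem \ref{3-symplectic forms on a family of 8-dimensional nilmanifolds}: its closedness condition reads
\[
\tfrac{3}{4}i(a_{3}+a_{8}+a_{12})-\overline{L_{3}}\,a_{6}+\overline{M_{2}}\,a_{2}-\overline{N}\,a_{1}=0,
\]
and plugging in $a_{8}=a_{6}=a_{1}=0$ immediately gives $\tfrac{3}{4}i(a_{3}+a_{12})+\overline{M_{2}}\,a_{2}=0$, which is the third line. Transversality of $\Psi$ is automatic from the choice $\eta=\omega^{3}$ for the diagonal Hermitian $\omega$.

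For the converse direction, assuming the three equations in \eqref{3-Symplectic plus Astheno-Kahler and SKT}, the vanishing of $a_{1},a_{4},a_{6},a_{7},a_{9},a_{11}$ together with $a_{8}=0$ makes the expression for $\partial\overline{\partial}\omega$ vanish identically (so $g$ is SKT); the second equation then gives $\partial\overline{\partial}\omega^{2}=0$ (so $g$ is Astheno--K\"ahler); and the third equation is precisely the hypothesis of Theorem \ref{3-symplectic forms on a family of 8-dimensional nilmanifolds} under the same vanishings, so $\Psi$ is $3$-symplectic. The main obstacle I anticipate is purely bookkeeping, namely verifying that the SKT characterization really does yield the six vanishings claimed in \cite{FT2011}; since the SKT equation produces a linear system in the moduli of the $a_{j}$ after projecting on each $(2,2)$-basis element, this is mechanical but requires care to avoid sign or index errors. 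Once this is in hand, the three pieces assemble trivially.
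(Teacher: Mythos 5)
Your proposal is correct and is essentially the paper's proof, which is literally the one-line observation that the theorem follows by combining the closedness condition \eqref{Condition for existence of 3-symplectic forms on 8-dimensional Nilmanifolds} with the SKT/Astheno-K\"ahler characterization of the diagonal metric from \cite[Theorem 2.7]{FT2011} and then setting $a_{8}=a_{1}=a_{4}=a_{6}=a_{7}=a_{9}=a_{11}=0$. One small imprecision in your converse: with $a_{8}=0$, the vanishings $a_{1}=a_{4}=a_{6}=a_{7}=a_{9}=a_{11}=0$ alone do \emph{not} make $\partial\overline{\partial}\omega$ vanish identically, since its $\eta^{1\overline{1}3\overline{3}}$-component is $|a_{2}|^{2}+|a_{5}|^{2}+|a_{10}|^{2}-2\,\mathrm{Re}(a_{3}\overline{a_{12}})$; the second line of \eqref{3-Symplectic plus Astheno-Kahler and SKT} is therefore also needed for SKT (it is available in your hypothesis, so the argument still closes, but SKT should not be attributed to the first line by itself).
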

\begin{proof}
    In order to prove the theorem, we have to combine \eqref{Condition for existence of 3-symplectic forms on 8-dimensional Nilmanifolds} and \cite[Theorem 2.7]{FT2011}.    
\end{proof}
\begin{remark}
    On such manifolds we can prove that if the diagonal metric is not Astheno-K\"ahler, then there cannot exists a $1$-pluriclosed form. Indeed, if $\omega$ denotes the fundamental form of the diagonal Hermitian metric, then 
    \begin{equation*}
        \begin{split}
            \partial \overline{\partial} \omega^{2} & = -\frac{1}{2} \partial \overline{\partial} (\sum_{j<l} \eta^{j \overline{j} l \overline{l}}) \\
            & = \frac{1}{2}(|a_{1}|^{2} + |a_{2}|^{2} + |a_{4}|^{2} + |a_{5}|^{2} + |a_{6}|^{2} + |a_{7}|^{2} + |a_{9}|^{2} \\
            & + |a_{10}|^{2} + |a_{11}|^{2} - 2 \text{Re} \big(a_{3} \overline{a_{8}} + a_{3} \overline{a_{12}} + a_{8} \overline{a_{12}} \big)) \eta^{123 \overline{123}}.
        \end{split}
    \end{equation*}
    Thus, we can use Lemma \ref{No-existence of p-pluriclosed} to conclude that if the diagonal metric is not Astheno-K\"ahler, then we cannot have a SKT metric compatible with the same complex structure. 
\end{remark}

\begin{remark}
    From system \eqref{3-Symplectic plus Astheno-Kahler and SKT}, we can easily observe that, if $a_{2}=0$ or $a_{3}=0$ or $a_{12}=0$ or $M_{2}=0$, then the complex manifold $M$ must be the torus.
    While, if $a_{2},a_{3},a_{12},M_{2} \neq 0$, there are non trivial solutions of system \eqref{3-Symplectic plus Astheno-Kahler and SKT}. Indeed, if we suppose that $a_{5}=a_{10}=0$ and $a_{3}=a_{12}$, then system \eqref{3-Symplectic plus Astheno-Kahler and SKT} becomes
    \begin{equation*}
        \begin{cases}
            |a_{2}|^{2} = 2|a_{3}|^{2}, \\
            a_{2} = \frac{3i}{2} \frac{a_{3}}{\overline{M}},
        \end{cases}
    \end{equation*}
    thus,
    \begin{equation*}
        \frac{9}{4} |a_{3}|^{2} = 2 |M|^{2} |a_{3}|^{2}.
    \end{equation*}
    Hence, if  
    \begin{equation*}
        |M|^{2} = \frac{9}{8},
    \end{equation*}
    there are non-trivial solutions of the system. 
\end{remark}

\subsection{$4$-symplectic form on families of $10$-dimensional nilmanifolds}
T. Sferruzza and the second author, in \cite{ST2023}, constructed a family of $5$-dimensional nilmanifolds that can admit the coexistence of $g$ and $g^{'}$ Hermitian metrics, both compatible with the complex structure $J$, and such that one is balanced and the other one is Astheno-K\"ahler. They considered the set of complex $(1,0)$-forms $\{\sigma^{1}, \dots, \sigma^{5}\}$ such that 
\begin{equation}\label{Structure equations for 10-dimensional Nilmanifolds with Astheno and 2-pluriclosed}
    \begin{cases}
        d \sigma^{j}=0, \quad \forall j= 1, \dots, 4, \\
        d \sigma^{5} = a_{1} \, \sigma^{12} + a_{2} \, \sigma^{13} + a_{3} \, \sigma^{14} + a_{4} \, \sigma^{1 \overline{1}} + a_{5} \, \sigma^{1 \overline{2}} + a_{6} \, \sigma^{1 \overline{3}} + a_{7} \, \sigma^{1 \overline{4}} \\
        \quad \quad \quad + b_{1} \, \sigma^{23} + b_{2} \, \sigma^{24} + b_{3} \, \sigma^{2 \overline{1}} + b_{4} \, \sigma^{2 \overline{2}} + b_{5} \, \sigma^{2 \overline{3}} + b_{6} \, \sigma^{2 \overline{4}} \\
        \quad \quad \quad + c_{1} \, \sigma^{34} + c_{2} \, \sigma^{3 \overline{1}} + c_{3} \, \sigma^{3 \overline{2}} + c_{4} \, \sigma^{3 \overline{3}} + c_{5} \, \sigma^{3 \overline{4}} \\
        \quad \quad \quad + d_{1} \, \sigma^{4 \overline{1}} + d_{2} \, \sigma^{4 \overline{2}} + d_{3} \, \sigma^{4 \overline{3}} + d_{4} \, \sigma^{4 \overline{4}},
    \end{cases}
\end{equation}
where $a_{r}, b_{s}, c_{t}, d_{u} \in \C[i]$, for $r=1, \dots , 7$, $s = 1, \dots 6$, $t=1 \dots, 5$, $d=1, \dots 4$. Then, the complex forms $\{\sigma^{1}, \dots, \sigma^{5}\}$ span the dual of a Lie algebra $\mathfrak{g}$, which is $2$-step nilpotent and depends on the choice of the coefficients. Furthermore, the almost complex structure $J$ defined by \eqref{Structure equations for 10-dimensional Nilmanifolds with Astheno and 2-pluriclosed} is integrable. Let us denote by $G$ the simply connected Lie group with Lie algebra $\mathfrak{g}$. Due to Malcev's Theorem (\cite{M1951}) and the nilpotency of $G$, if $a_{r}, b_{s}, c_{t}, d_{u} \in \Q[i]$, for $r=1, \dots , 7$, $s = 1, \dots 6$, $t=1 \dots, 5$, $d=1, \dots 4$, then there exists a lattice $\Gamma$ such that $M \doteq \Gamma \backslash G$ is a nilmanifold.
Let us study the existence of $4$-symplectic forms on $M$.
\begin{theorem}\label{4-symplectic forms on a family of 10-dimensional nilmanifolds}
    Let $(M \doteq \Gamma \backslash G,J,g)$ be the $10$-dimensional nilmanifold equipped with the invariant complex structure $J$ that stems from \eqref{Structure equations for 10-dimensional Nilmanifolds with Astheno and 2-pluriclosed} and the diagonal metric $g$, whose fundamental form is denoted by $\omega$. Let $\Psi$ be an invariant, real, transverse $8$-form, i.e., $\Psi \doteq \lambda^{5,3} + \eta + \overline{\lambda^{5,3}}$, where 
    \begin{equation*}
        \begin{cases}
             & \lambda^{5,3} \doteq L_{1} \sigma^{12345 \overline{123}} + L_{2} \sigma^{12345 \overline{124}} + L_{3} \sigma^{12345 \overline{125}} + M_{1} \sigma^{12345 \overline{134}} + M_{2} \sigma^{12345 \overline{135}} \\ 
             &\quad \quad \quad + N_{1} \sigma^{12345 \overline{145}} + S_{1} \sigma^{12345 \overline{234}}  + S_{2} \sigma^{12345 \overline{235}} + S_{3} \sigma^{12345 \overline{245}} + P \sigma^{12345 \overline{345}}, \\
             & \eta \doteq \omega^{4},
        \end{cases}
    \end{equation*}
    and $L_{1}, L_{2}, L_{3}, M_{1}, M_{2}, N_{1}, S_{1}, S_{2}, S_{3}, P \in \C$. Then, $\Psi$ is a $4$-symplectic form if and only if 
    \begin{equation}\label{Condition for existence of 4-symplectic forms on 10-dimensional Nilmanifolds}
         \frac{3}{2}(d_{4} + c_{4} + b_{4} + a_{4}) - \overline{L_{3}} c_{1} + \overline{M_{2}} b_{2} - \overline{N_{1}} b_{1} - \overline{S_{2}} a_{3} +\overline{S_{3}} a_{2} - \overline{P} a_{1} = 0.  
    \end{equation}
\end{theorem}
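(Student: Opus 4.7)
The plan is to compute $d\Psi$ directly from the structure equations \eqref{Structure equations for 10-dimensional Nilmanifolds with Astheno and 2-pluriclosed}, in parallel with Theorems \ref{existence of 2-symplectic} and \ref{3-symplectic forms on a family of 8-dimensional nilmanifolds}. The $(4,4)$-component of $\Psi$ is $\omega^4$, the fourth power of the fundamental form of the diagonal Hermitian metric, hence automatically pointwise transverse, so $\Psi$ is $4$-symplectic iff $d\Psi=0$. Since $\Psi$ is real, this reduces to the vanishing of the $(5,4)$-component of $d\Psi$ alone (the $(4,5)$-component being its conjugate). In complex dimension $5$ the bidegree $(6,3)$ is zero, so $d\lambda^{5,3}=\overline{\partial}\lambda^{5,3}$ has pure bidegree $(5,4)$, and I must show
\[
    \overline{\partial}\lambda^{5,3}+\partial\omega^4=0.
\]

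The structural cornerstone I would first establish is $d\sigma^{12345}=0$: every term on the right-hand side of $d\sigma^5$ contains some $\sigma^i$ with $i\in\{1,2,3,4\}$, so $\sigma^{1234}\wedge d\sigma^5=0$ by repeated indices. Consequently $d(\sigma^{12345}\wedge\sigma^{\overline{J}})=-\sigma^{12345}\wedge\overline{\partial}\sigma^{\overline{J}}$; among the ten basis $(5,3)$-forms, only the six indexed by $J$ containing $5$ contribute, and only the $(0,2)$-part of $\overline{\partial}\sigma^{\overline{5}}=\overline{\partial\sigma^5}$, which carries the conjugates of $a_1,a_2,a_3,b_1,b_2,c_1$, survives after wedging with $\sigma^{\overline{ij}}$. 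A case-by-case evaluation over the six complementary pairs $\{i,j\}\subset\{1,2,3,4\}$ will yield
\[
    \overline{\partial}\lambda^{5,3}=-\bigl(L_3\overline{c_1}-M_2\overline{b_2}+N_1\overline{b_1}+S_2\overline{a_3}-S_3\overline{a_2}+P\overline{a_1}\bigr)\,\sigma^{12345\overline{1234}}.
\]

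For $\partial\omega^4$ I would expand $\omega^4=\frac{3}{2}\sum_{|K|=4}\sigma^{K\overline{K}}$ over the five $4$-subsets of $\{1,\dots,5\}$. The summand with $K=\{1,2,3,4\}$ is closed; each of the other four contains $5$, and for each such $K$ exactly one diagonal coefficient among $\overline{a_4},\overline{b_4},\overline{c_4},\overline{d_4}$, corresponding to the unique missing index $k\in\{1,2,3,4\}$, contributes to the $(5,4)$-component, producing $\partial\omega^4=\frac{3}{2}\,\overline{(a_4+b_4+c_4+d_4)}\,\sigma^{12345\overline{1234}}$. Summing the two expressions, setting the coefficient of $\sigma^{12345\overline{1234}}$ to zero and taking the complex conjugate recovers exactly condition \eqref{Condition for existence of 4-symplectic forms on 10-dimensional Nilmanifolds}. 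The only genuinely delicate point is the sign bookkeeping in the six reorderings $\sigma^{\overline{ij}}\wedge\sigma^{\overline{kl}}=\pm\sigma^{\overline{1234}}$ for complementary pairs; beyond that, the argument follows the template of Theorem \ref{3-symplectic forms on a family of 8-dimensional nilmanifolds} without surprise.
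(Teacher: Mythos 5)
Your proposal is correct and follows essentially the same route as the paper: both reduce the problem to the vanishing of the coefficient of $\sigma^{12345\overline{1234}}$ in $d\Psi$ (transversality of the $(4,4)$-part being automatic since $\eta=\omega^4$), compute $d\lambda^{5,3}=\overline{\partial}\lambda^{5,3}$ term by term using $d\sigma^{12345}=0$, and compute $d\omega^4$ from the diagonal expansion, with all your stated intermediate expressions matching those in the paper's proof. The observation that only the six coefficients attached to conjugate multi-indices containing $\overline{5}$, paired against the $(0,2)$-part of $\overline{d\sigma^5}$, can contribute is exactly the organizing principle of the paper's computation.
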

\begin{proof}
    Since 
    \begin{equation*}
        \eta = \frac{3}{2} \big( \sigma^{1234 \overline{1234}} + \sigma^{1235 \overline{1235}} + \sigma^{1245 \overline{1245}} + \sigma^{1345 \overline{1345}} + \sigma^{2345 \overline{2345}} \big),
    \end{equation*}
    then, by a straightforward computation
    \begin{equation*}
        d \eta = \frac{3}{2} \big(a_{4} + b_{4} + c_{4} + d_{4} \big) \eta^{1234 \overline{12345}} + \frac{3}{2} \big(\overline{a_{4}} + \overline{b_{4}} + \overline{c_{4}} + \overline{d_{4}} \big) \eta^{12345 \overline{1234}}.
    \end{equation*}
    Furthermore,
    \begin{equation*}
        \begin{split}
        & d \lambda^{5,3} = \big(-L_{3} \overline{c_{1}} + M_{2}\overline{b_{2}} - N_{1} \overline{b_{1}} - S_{2}\overline{a_{3}} + S_{3}\overline{a_{2}} - P\overline{a_{1}} \big) \sigma^{12345 \overline{1234}}, \\
        & d \overline{\lambda^{5,3}} = \big(-c_{1} \overline{L_{3}} + b_{2}\overline{M_{2}} - b_{1} \overline{N_{1}} - a_{3}\overline{S_{2}} + a_{2}\overline{S_{3}} - a_{1}\overline{P} \big)\sigma^{1234 \overline{12345}}.
        \end{split}
    \end{equation*}
    Thus, $d \Psi = 0$ if and only if 
    \begin{equation*}
        \frac{3}{2} \big(a_{4} + b_{4} + c_{4} + d_{4} \big) -c_{1} \overline{L_{3}} + b_{2}\overline{M_{2}} - b_{1} \overline{N_{1}} - a_{3}\overline{S_{2}} + a_{2}\overline{S_{3}} - a_{1}\overline{P} = 0.
    \end{equation*}  
\end{proof}

Let $g$ denotes the diagonal metric on $M$, and $\omega$ the fundamental form of $g$. In \cite{ST2023}, the authors proved that the diagonal metric $g$ is Astheno-K\"ahler if and only if 
\begin{equation*}
        \begin{split}
            2 \text{Re} \big(d_{4} \overline{a_{4}} + d_{4} \overline{b_{4}} + d_{4} \overline{c_{4}} + c_{4}\overline{ a_{4}} + c_{4} \overline{b_{4}} + b_{4} \overline{a_{4}}  \big) = & \, |a_{1}|^{2} + |a_{2}|^{2} + |a_{3}|^{2} + |a_{5}|^{2} \\
            & + |a_{6}|^{2} + |a_{7}|^{2} + |b_{1}|^{2} + |b_{2}|^{2} \\
            & +|b_{3}|^{2} + |b_{5}|^{2} + |b_{6}|^{2} + |c_{1}|^{2} \\
            & + |c_{2}|^{2} + |c_{3}|^{2} + |d_{1}|^{2} + |d_{2}|^{2}.
        \end{split}
    \end{equation*}
Moreover, they proved that, if 
\begin{equation}\label{Astheno plus 2-pluriclosed}
    \begin{split}
        & a_{2} = a_{3} = a_{5} = a_{6} = a_{7} = b_{1} = b_{2} = b_{3} = b_{5} = b_{6} = c_{2} = c_{3} = \\
        & = c_{5} = d_{1} = d_{2} = d_{3} = 0,
    \end{split}
\end{equation}
then 
$g$ is Astheno-K\"ahler and $\partial \overline{\partial} \omega^{2} = 0$ if and only if 
\begin{equation*}
    \begin{cases}
         2 \text{Re} \big(d_{4} \overline{a_{4}} + d_{4} \overline{b_{4}} + d_{4} \overline{c_{4}} \big) = |c_{1}|^{2}, \\
         2 \text{Re} \big(c_{4}\overline{ a_{4}} + c_{4} \overline{b_{4}} + b_{4} \overline{a_{4}} \big) = |a_{1}|^{2}, \\
         \text{Re}\big(c_{4} \overline{b_{4}} - d_{4}\overline{a_{4}} \big)=0 , \\
         \text{Re} \big(b_{4}\overline{d_{4}} - c_{4}\overline{a_{4}} \big) = 0.
    \end{cases}
\end{equation*}
In this case, a $4$-symplectic form can coexist with both an Astheno-K\"ahler metric and a $2$-pluriclosed form. As a direct consequence we obtain the following
\begin{theorem}\label{4-symplectic and Astheno and 2-pluriclosed}
    Let $(M \doteq \Gamma \backslash G,J,g)$ be as in Theorem \ref{4-symplectic forms on a family of 10-dimensional nilmanifolds} and suppose that \eqref{Astheno plus 2-pluriclosed} holds. $g$ is Astheno-K\"ahler, $\omega^{2}$ is $2$-pluriclosed and there exists a $4$-symplectic form $\Psi$ as aforementioned if and only if 
    \begin{equation}\label{2-pluriclosed, Astheno-Kahler and 4-symplectic}
       \begin{cases}
         2 \text{Re} \big(d_{4} \overline{a_{4}} + d_{4} \overline{b_{4}} + d_{4} \overline{c_{4}} \big) = |c_{1}|^{2}, \\
         2 \text{Re} \big(c_{4}\overline{ a_{4}} + c_{4} \overline{b_{4}} + b_{4} \overline{a_{4}} \big) = |a_{1}|^{2}, \\
         \text{Re}\big(c_{4} \overline{b_{4}} - d_{4}\overline{a_{4}} \big)=0 , \\
         \text{Re} \big(b_{4}\overline{d_{4}} - c_{4}\overline{a_{4}} \big) = 0, \\
         \frac{3}{2} \big(a_{4} + b_{4} + c_{4} + d_{4} \big) - c_{1} \overline{L_{3}} - a_{1}\overline{P} = 0.
        \end{cases}
    \end{equation}
\end{theorem}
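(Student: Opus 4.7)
The plan is to reduce the theorem to a direct combination of three previously established facts: the $4$-symplectic condition from Theorem \ref{4-symplectic forms on a family of 10-dimensional nilmanifolds}, the Astheno-K\"ahler criterion from \cite{ST2023} for this family, and the $\partial\overline{\partial}\omega^{2}=0$ criterion from \cite{ST2023}. Since the first four equations of \eqref{2-pluriclosed, Astheno-Kahler and 4-symplectic} are exactly the conjunction of the Astheno-K\"ahler and $2$-pluriclosed conditions under the hypothesis \eqref{Astheno plus 2-pluriclosed}, as already recalled in the paragraph immediately preceding the theorem, the only real work is to substitute \eqref{Astheno plus 2-pluriclosed} into \eqref{Condition for existence of 4-symplectic forms on 10-dimensional Nilmanifolds} and check that it collapses to the fifth equation of \eqref{2-pluriclosed, Astheno-Kahler and 4-symplectic}.

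Concretely, first I would invoke Theorem \ref{4-symplectic forms on a family of 10-dimensional nilmanifolds}, which asserts that the invariant form $\Psi=\lambda^{5,3}+\omega^{4}+\overline{\lambda^{5,3}}$ is $4$-symplectic if and only if
\begin{equation*}
    \tfrac{3}{2}(a_{4}+b_{4}+c_{4}+d_{4})-\overline{L_{3}}\,c_{1}+\overline{M_{2}}\,b_{2}-\overline{N_{1}}\,b_{1}-\overline{S_{2}}\,a_{3}+\overline{S_{3}}\,a_{2}-\overline{P}\,a_{1}=0.
\end{equation*}
Then I would impose the standing assumption \eqref{Astheno plus 2-pluriclosed}, which in particular forces $a_{2}=a_{3}=b_{1}=b_{2}=0$, so that four of the six structural terms in the above relation vanish identically; what remains is precisely
\begin{equation*}
    \tfrac{3}{2}(a_{4}+b_{4}+c_{4}+d_{4})-\overline{L_{3}}\,c_{1}-\overline{P}\,a_{1}=0,
\end{equation*}
which is the last line of the system \eqref{2-pluriclosed, Astheno-Kahler and 4-symplectic}.

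Next I would recall the Astheno-K\"ahler criterion from \cite{ST2023} quoted in the paragraph preceding the theorem and evaluate it under \eqref{Astheno plus 2-pluriclosed}. Because nearly all the squared moduli on the right-hand side of that criterion involve coefficients that are assumed to vanish, the condition simplifies to a single real identity involving only $a_{1}$, $c_{1}$ and the diagonal entries $a_{4},b_{4},c_{4},d_{4}$; coupled with the $\partial\overline{\partial}\omega^{2}=0$ criterion from \cite{ST2023}, this reproduces exactly the first four equations in \eqref{2-pluriclosed, Astheno-Kahler and 4-symplectic} (the two real quadratic identities involving $|a_{1}|^{2}$ and $|c_{1}|^{2}$, together with the two vanishing real parts). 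No positivity or transversality issue needs to be reopened, since the transversality of $\Psi$ was already folded into the hypotheses of Theorem \ref{4-symplectic forms on a family of 10-dimensional nilmanifolds}.

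The argument is therefore essentially bookkeeping: the main (minor) obstacle is confirming that every coefficient killed by \eqref{Astheno plus 2-pluriclosed} is exactly matched to one of the six structural terms in \eqref{Condition for existence of 4-symplectic forms on 10-dimensional Nilmanifolds}, so that the $4$-symplectic constraint decouples cleanly from the metric constraints and the system \eqref{2-pluriclosed, Astheno-Kahler and 4-symplectic} indeed characterizes the simultaneous coexistence. Once this verification is done, both implications of the if-and-only-if follow at once by conjunction of the three characterizations, and the proof amounts to a single short paragraph.
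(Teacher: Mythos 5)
Your proposal is correct and follows exactly the route of the paper, whose proof is literally ``combine \eqref{Condition for existence of 4-symplectic forms on 10-dimensional Nilmanifolds} with \cite[Theorem 4.1]{ST2023}''; you have simply carried out the bookkeeping explicitly, checking that \eqref{Astheno plus 2-pluriclosed} kills the terms in $a_{2},a_{3},b_{1},b_{2}$ so that the $4$-symplectic condition collapses to the last line of \eqref{2-pluriclosed, Astheno-Kahler and 4-symplectic}, while the first four lines are the quoted Astheno-K\"ahler and $2$-pluriclosed criteria.
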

\begin{proof}
    In order to prove the theorem, we have to combine \eqref{Condition for existence of 4-symplectic forms on 10-dimensional Nilmanifolds} and \cite[Theorem 4.1]{ST2023}.
\end{proof}
\subsection{Non K\"ahler manifolds with trivial first Chern class and non vanishing first Bott-Chern class}
Here, we provide an example of a manifold which has first Chern class vanishing but first Bott-Chern class non vanishing. We refer to \cite{T2015} for notation and convention. We recall that, if $M$ is a complex manifold, the Bott-Chern cohomology of $M$ is the bigraded algebra 
\begin{equation*}
    H^{\bullet, \bullet}_{BC}(M) \doteq \frac{\text{ker} \partial \cap \text{ker} \overline{\partial}}{\text{im} \partial \overline{\partial}}.
\end{equation*}
The first Bott-Chern class is given by the map $c_{1}^{BC}:\text{Pic}(M) \to H^{1,1}_{BC}(M;\R)$. As it is showed in \cite{T2015}, it is easy to show that this is a well defined map and that, if $(M,g)$ is a Hermitian manifold with fundamental form denoted by $\omega$, then $c_{1}^{BC}(M) \doteq c_{1}^{BC}(K^{\ast}_{M})$, where $K_{M}$ is the canonical bundle, is represented locally by 
\begin{equation*}
    \text{Ric}(\omega) = -i \partial \overline{\partial} \, \text{log} \, \text{det} g.
\end{equation*}
Non-K\"ahler Calabi Yau manifolds are compact complex Hermitian manifolds with $c_{1}^{BC}(M)$ vanishing in $H^{1,1}_{BC}(M;\R)$. These manifolds are strictly contained in the class of manifolds that have $c_{1}(M)$ vanishing in $H^{2}(M;\R)$. V. Tosatti non-K\"ahler Calabi-Yau manifolds providing several examples. In particular, he showed that the the family of small deformation of Nakamura manifold \cite[Example III-(3b)]{N1975} (see \cite[Proposition 1 (case 2.)]{AK2017}), which do not satisfy the $\partial \overline{\partial}$-Lemma, have vanishing first Bott-Chern class. Furthermore, he showed that Hopf manifolds have first Chern class vanishing but non vanishing first Bott-Chern class. 

Let us consider the Lie group $S_{1, \frac{\pi}{2}}$ given by the following semidirect product 
\begin{equation*}
    S_{1, \frac{\pi}{2}} \doteq \R \ltimes_{\varphi}(\R \times \R^{2} \times \R^{2}),
\end{equation*}
where the action of $\varphi$ is given by 
\begin{equation*}
    \begin{pmatrix}
        t \\
        x_{1} \\
        x_{2} \\
        x_{3} \\
        x_{4} \\
        x_{5}
   \end{pmatrix} \cdot
    \begin{pmatrix}
        t^{'} \\
        x_{1}^{'} \\
        x_{2}^{'} \\
        x_{3}^{'} \\
        x_{4}^{'} \\
        x_{5}^{'}
   \end{pmatrix} = 
   \begin{pmatrix}
        t^{'} + t \\
        e^{-t}x_{1}^{'} + x_{1} \\
        e^{\frac{t}{2}}x_{2}^{'} + x_{2} \\
        e^{\frac{t}{2}}x_{3}^{'} + x_{3} \\
        x_{4}^{'} \text{cos}(\frac{\pi t}{2}) - x_{5}^{'} \text{sin}(\frac{\pi t}{2}) + x_{4} \\
        x_{4}^{'} \text{sin}(\frac{\pi t}{2}) + x_{5}^{'} \text{cos}(\frac{\pi t}{2}) + x_{5},
   \end{pmatrix} 
\end{equation*}
and $(t,x_{1},x_{2},x_{3},x_{4},x_{5})$ are global coordinates on $\R^{6}$. The following left-invariant $1$-forms
\begin{equation*}
    \begin{split}
        e^{1} \doteq e^{t} d x_{1}, \quad e^{2} \doteq dt, \quad e^{3} \doteq e^{- \frac{t}{2}} dx_{2}, \quad e^{4} \doteq e^{- \frac{t}{2}} dx_{3}\\
        e^{5} \doteq \text{cos}(\frac{\pi t}{2})dx_{4} + \text{sin}(\frac{\pi t}{2})dx_{5}, \quad e^{6} \doteq - \text{sin}(\frac{\pi t}{2})dx_{4} + \text{cos}(\frac{\pi t}{2})dx_{5},
    \end{split}
\end{equation*}
satisfies 
\begin{equation*}
    \begin{cases}
        d e^{1} = -e^{12}, \\
        d e^{2} = 0, \\
        d e^{3} = -\frac{1}{2}e^{23}, \\
        d e^{4} = -\frac{1}{2}e^{24}, \\
        d e^{5} = \frac{\pi}{2}e^{26}, \\
        d e^{6} = - \frac{\pi}{2} e^{25},
    \end{cases}
\end{equation*}
hence, $S_{1, \frac{\pi}{2}}$ is a $2$-step solvable Lie group.

As it is showed in \cite[Lemma 3.1]{FT2009}, $S_{1, \frac{\pi}{2}}$ admits a compact quotient $M \doteq \Gamma \backslash S_{1,\frac{\pi}{2}}$, where $\Gamma \subseteq S_{1, \frac{\pi}{2}}$ is a lattice. Furthermore, $M$ can be equipped with the following integrable almost complex structure
\begin{equation*}
    \phi^{1} \doteq e^{1} + i e^{2}, \quad \phi^{2} \doteq e^{3} + i e^{4}, \quad \phi^{3} \doteq e^{5} + i e^{6}.
\end{equation*}
Local holomorphic coordinates for $M$ are given by 
\begin{equation*}
    z_{1} \doteq x_{1} - ie^{-t}, \quad z_{2} \doteq x_{2} + i x_{3}, \quad z_{3} \doteq x_{4} + i x_{5},
\end{equation*}
indeed 
\begin{equation*}
    d z_{1} = e^{-t} \phi^{1}, \quad dz_{2} = e^{\frac{1}{2}t}\phi^{2}, \quad dz_{3} = \text{cos}(\frac{\pi t}{2}) \phi^{3} + \text{sin}(\frac{\pi t}{2}) \phi^{3}.
\end{equation*}
We want to show that the solvable manifold $M$, equipped with the invariant complex structure aforementioned, has $c_{1}(M)=0$, but $c_{1}^{BC}(M) \neq 0$. 

Let us consider 
\begin{equation*}
    \omega \doteq \frac{i}{2} \big(\phi^{1 \overline{1}} + \phi^{2 \overline{2}} + \phi^{3 \overline{3}} \big);
\end{equation*}
then $\omega$ is the fundamental form of a Hermitian metric on $M$ and it turns out that $\omega$ is SKT. It holds
\begin{equation*}
    \begin{split}
        \omega^{3} & = \frac{3}{2} i^{3} \phi^{123 \overline{123}} = \widetilde{c}_{3} e^{123456} = \widetilde{c}_{3} d x_{1} \wedge dt \wedge d x_{2} \wedge d x_{3} \wedge d x_{4} \wedge d x_{5} \\
        & = \widetilde{c}_{3} e^{t} dz_{1} \wedge d \overline{z_{1}} \wedge d z_{2} \wedge d \overline{z_{2}} \wedge d z_{3} \wedge d \overline{z_{3}},
    \end{split}
\end{equation*}
where $\widetilde{c}_{3} \in \R$, hence, the Ricci form is given, locally, by
\begin{equation*}
    \text{Ric}(\omega) = -i \partial \overline{\partial} \, \text{log}(\,\widetilde{c}_{3} \, e^{t}) = -i \partial \overline{\partial} t.
\end{equation*}
Since 
\begin{equation*}
    d t = e^{t}\bigg(\frac{d z_{1} - d \overline{z_{1}}}{2i}\bigg), \quad de^{t} = e^{2t} \bigg(\frac{dz_{1} - d\overline{z_{1}}}{2i} \bigg),   
\end{equation*}
then 
\begin{equation*}
    \text{Ric}(\omega) = -\frac{i}{4} e^{2t} dz_{1} \wedge d \overline{z_{1}}.
\end{equation*}
We can easily see that it is $d$-exact, indeed 
\begin{equation*}
    d \phi^{1} = e^{t} dt \wedge dx_{1} = e^{2t} \bigg( \frac{dz_{1} - d \overline{z_{1}}}{2i} \bigg) \wedge \bigg(\frac{dz_{1} + d \overline{z_{1}}}{2} \bigg) = - \frac{i}{2} e^{2t}  dz_{1} \wedge d \overline{z_{1}},
\end{equation*}
hence $\frac{1}{2} d\phi^{1} = \text{Ric}(\omega)$.
Meanwhile, $\text{Ric}(\omega)$ is semidefinite negative, but it is not identically zero. Hence, arguing as in \cite[Example 3.3]{T2015}, if there exists a function such that $\text{Ric}(\omega) = i \partial \overline{\partial} F$, then $F$ must be constant and $\text{Ric}(\omega)$ must be zero. This yields the thesis.

Furthermore, since 
\begin{equation*}
    d \phi^{1} = -\frac{i}{2} \phi^{1 \overline{1}},
\end{equation*}
we can conclude from Lemma \ref{No-existence of p-symplectic} that there are no $2$-symplectic forms on $M$.

\nocite{*}
\bibliographystyle{siam}
\bibliography{./reference}

\end{document}